\titleformat{\section}{\large\bfseries\center\raggedright}{\thesection}{0.5em}{{#1}}[]
\titleformat{\subsection}{\bfseries\center\raggedright}{\thesubsection}{0.5em}{{#1}}[]
\titleformat{\subsubsection}[runin]{\bfseries}{\thesubsubsection}{0.5em}{{#1}}[.]
\titlespacing*{\section}{0pt}{0.8\baselineskip}{0.6\baselineskip}
\titlespacing*{\subsection}{0pt}{0.6\baselineskip}{0.4\baselineskip}
\titlespacing*{\subsubsection}{0pt}{0.4\baselineskip}{0.4\baselineskip}
\newcommand\namefont{\normalfont\bfseries}
\newcommand\numberfont{\normalfont\bfseries}
\newcommand\notefont{\normalfont\bfseries}
\newtheoremstyle{mystyle} 
	{0.3em} 
	{0.3em} 
	{\itshape} 
	{} 
	{\normalfont} 
	{.} 
	{.5em} 
	{{\namefont\thmname{#1}}~{\numberfont\thmnumber{#2}}{\notefont\thmnote{ (#3)}}} 
\theoremstyle{mystyle}
\newtheorem{thm}{Theorem}[section]
\newtheorem{lem}[thm]{Lemma}
\newtheorem{prop}[thm]{Proposition}
\newtheorem{hyp}[thm]{Assumption}
\newtheorem{defn}[thm]{Definition}
\newtheoremstyle{mystyle2} 
	{0.3em} 
	{0.3em} 
	{\normalfont} 
	{} 
	{\normalfont} 
	{.} 
	{.5em} 
	{{\namefont\thmname{#1}}~{\numberfont\thmnumber{#2}}{\notefont\thmnote{ (#3)}}} 
\theoremstyle{mystyle2}
\newtheorem{rem}[thm]{Remark}
\renewcommand{\leq}{\leqslant}
\renewcommand{\geq}{\geqslant}
\newcommand{\te}{\textrm}
\definecolor{dred}{rgb}{0.75,0,0}
\definecolor{darkgreen}{rgb}{0.1,0.6,0.1}
\definecolor{darkblue}{rgb}{0.1,0.1,0.6}
\definecolor{mygreen}{rgb}{0.1,0.75,0.2}
\definecolor{darkred}{rgb}{0.6,0.1,0.1}
\definecolor{darkgreen}{rgb}{0.1,0.6,0.1}
\definecolor{darkblue}{rgb}{0.1,0.1,0.6}
\newcommand{\be}{\begin{equation}}
\newcommand{\ee}{\end{equation}}
\newcommand{\bes}{\begin{equation*}}
\newcommand{\ees}{\end{equation*}}
\newcommand{\bfig}{\begin{figure}}
\newcommand{\efig}{\end{figure}}
\newcommand{\bt}{\begin{table}}
\newcommand{\et}{\end{table}}
\newcommand{\bc}{\begin{center}}
\newcommand{\ec}{\end{center}}
\newcommand{\mt}[1]{\mathrm{#1}}
\def\st{\, \left|\right. \,}
\def\:{\colon}
\def\wto{\rightharpoonup}
\newcommand\restrict[1]{\raisebox{-.1ex}{\footnotesize$|$}_{\raisebox{.2ex}{\scriptsize $#1$}}} 
\newcommand{\abs}[1]{\left\vert#1\right\vert}
\newcommand{\ap}[1]{\left\langle#1\right\rangle}
\newcommand{\norm}[1]{\left\Vert#1\right\Vert}
\def\grad{\nabla}
\DeclareMathOperator{\dive}{div}
\DeclareMathOperator{\supp}{supp}
\DeclareMathOperator{\dom}{dom}
\DeclareMathOperator{\inte}{int}
\def\R{\mathbb{R}} 
\def\N{\mathbb{N}} 
\def\P{\mathcal{P}} 
\def\M{\mathcal{M}} 
\def\e{\varepsilon}
\def\d{\,\mathrm{d}}
\def\ird{\int_{\R^d}}
\def\irdrd{\int_{\R^d\times\R^d}}
\def\XXint#1#2#3{{\setbox0=\hbox{$#1{#2#3}{\int}$ }
	\vcenter{\hbox{$#2#3$ }}\kern-.6\wd0}}
\def\der{\mathrm{d}}
\def\p{\partial}
\def\id{\mathrm{id}} 
\begin{document}
\author{Francesco S. Patacchini \and  Dejan Slep\v{c}ev}
\address{Francesco S. Patacchini -- IFP Energies nouvelles, 1 et 4 avenue de Bois-Pr\'eau, 92852 Rueil-Malmaison, France; Department of Mathematical Sciences, Carnegie Mellon University, Pittsburgh, PA 15213, USA}
\address{Dejan Slep\v{c}ev -- Department of Mathematical Sciences, Carnegie Mellon University, Pittsburgh, PA 15213, USA}
\email{francesco.patacchini@ifpen.fr}
\email{slepcev@math.cmu.edu}

\title{The nonlocal-interaction equation near attracting manifolds}

\date{\today}

\maketitle


\begin{abstract}
	We study the approximation of the nonlocal-interaction equation restricted to a compact manifold $\M$ embedded in $\R^d$, and more generally compact sets with positive reach (i.e. prox-regular sets).  We show  that the equation on $\M$ can be approximated by the classical nonlocal-interaction equation on $\R^d$ by adding an external potential which strongly attracts to $\M$. The proof relies on the Sandier--Serfaty approach \cite{SS,Serfaty} to the $\Gamma$-convergence of gradient flows.
	As a by-product, we recover well-posedness for the nonlocal-interaction equation on $\M$, which was shown \cite{CSW}.
	We also provide an another approximation to the interaction equation on $\M$, based on iterating
	approximately solving an interaction equation on $\R^d$ and 
	projecting to $\M$.  We show convergence of this scheme, together with an estimate on the rate of convergence. Finally, we conduct numerical experiments, for both the attractive-potential-based and the projection-based approaches, that highlight the effects of the geometry on the dynamics.
\end{abstract}

\section{Introduction}

We consider a continuum first-order model for the nonlocal-interaction of agents constrained to move within a compact manifold $\M$ embedded in Euclidean space $\R^d$. 
While the locations of the agents are restricted to $\M$ the interaction forces act in the ambient space. 

Although for simplicity we shall often call $\M$ a manifold, our setup actually allows for arbitrary sets with positive reach which include  manifolds with boundaries and with outside corners.
When $\M$ has full dimension, what we are studying is in fact an aggregation model with no-flux boundary conditions. The interaction is modeled via a smooth pairwise potential which can feature distinct regimes of repulsion and attraction, depending on the Euclidean distance that separates any pair of agents. The setting in which we consider our solutions and equations is that of gradient flows (curves of maximal slope) in the spaces of probability measures endowed with Wasserstein metric. We base our work  on the theory developed in \cite{AGS,SS,Serfaty}.

Let us describe further our problem and motivate it by reviewing recent relevant works. We then present our notation, assumptions and main results.

\subsection{Description of the problem and motivation}

The nonlocal-interaction equation on a compact manifold $\M$ embedded in $\R^d$ that we study is given by
\be\label{eq:main}
	\begin{cases}
	    \p_t \rho + \dive(\rho u) = 0,\\ u = P_\M(-\grad W * \rho),
	\end{cases} \quad \text{on $\M$},
\ee
where $W\: \R^d \to \R$ is an interaction potential and $P_\M$ is the generalization of the projection on the tangent space of $\M$, defined precisely in \eqref{eq:projection}.
Here, the unknown $\rho\in\P(\M)$ is a Borel probability measure supported on $\M$. When $\M$ is $d$-dimensional, this problem is the interaction equation on $\M$ with no-flux boundary conditions. 

Note that the gradient and convolution operators are Euclidean, i.e., with respect to the ambient space $\R^d$, and not intrinsic to $\M$. In particular, the convolution in \eqref{eq:main} is given by
\be\label{eq:convolution}
	W*\rho(x) = \int_\M W(x-y)\d\rho(y) \quad \text{for all $x\in\M$}.
\ee
This is an important point to bear in mind as it means that \eqref{eq:main} is a mixed formulation, where any two point masses on $\M$ "see" each other and interact according to the  Euclidean distance while their motion is restricted to the manifold (via the projection operator). This mixed formulation \eqref{eq:main} has already been studied in \cite{CSW,WuSle15}. In \cite{WuSle15} the authors observe that, when $\M$ is $d$-dimensional and has $C^2$ boundary, this formulation is the gradient flow of the interaction energy
\be\label{eq:energy}
    E(\rho) = \begin{cases} \frac12\int_\M\int_\M W(x-y) \d\rho(y)\d\rho(x) & \text{if $\rho\in\P(\M)$},\\ \infty & \text{otherwise}, \end{cases}
\ee
on the space $\P(\M)$ endowed with Wasserstein metric. In particular the steepest descent vector, denoted here $-\mathrm{grad} E(\rho)$ at a given configuration $\rho \in P(\M)$, satisfies
\be\label{eq:grad-descent}
    -\mathrm{grad}\, E(\rho) = P_\M(-\grad W* \rho).
\ee

We remark that 
adding a mobility matrix $A$ in \eqref{eq:main}, which models the space heterogeneity,  can be done without difficulty, changing \eqref{eq:grad-descent} into $-\mathrm{grad}\, E(\rho) = P_\M(-A\grad W* \rho)$; see \cite{WuSle15}.  In \cite{WuSle15}, the tangent vectors of the gradient flow formulation for the energy $E$ are vectors in $\R^d$ equipped with the associated Riemannian inner product. Note that, in a similar fashion, although without restriction to a subset, a Fokker--Planck equation in $\R^d$ with the mobility being the inverse of the metric tensor of a Riemannian manifold was studied in \cite{Lisini}.

In \cite{CSW} the authors extend the study of the well-posedness of this formulation to more general subsets of $\R^d$. One main difficulty with this extension is to use an appropriate notion of projection of vector fields on $\M$. When $\M$ is a smooth $d$-dimensional manifold as in \cite{WuSle15}, the projection of a vector $v\in\R^d$ at a point $x\in\M$ is the identity when $x\in \inte(\M)$ or $x\in \p\M$ and $v$ points into $\M$, i.e., $v$ belongs to the inward sector $T_x^\mathrm{in}\M$ of $T_x\M$ at $x$, and is the projection of $v$ to $T_x\p\M\subset T_x^\mathrm{in}\M$ when $x\in\p\M$ and $v$ does not point into $\M$. Note that when $x\in\inte(\M)$ we actually have $T_x^\mathrm{in}\M = T_x\M$ and $T_x^\mathrm{in}\M$ is thus a linear vector space; in general, for any $x\in\M$, we have $T_x^\mathrm{in}\M \subset T_x\M$. If we want to consider domains that are either lower-dimensional or present boundaries with corners, the set of tangent vectors "pointing into $\M$" (the inward sector) used to define the projection on a smooth manifold needs to be updated. To this end, the authors in \cite{CSW} extend the theory to prox-regular, or positive-reach, subsets of $\R^d$ for which inward tangent vectors are elements of Clarke tangent cones; see definitions in Section \ref{subsec:notation-assumptions}. Prox-regular sets extend significantly the variety of domains that can be studied; indeed, these include sets with outside corners and cusps in their boundaries. They can also be of dimension strictly less than $d$.
A  prox-regular set is a set with a tubular neighborhood whose every point has a unique closest point on the boundary of the set, i.e., a unique projection on the boundary. The radius of the largest such tubular neighborhood is called the reach of the set. Prox-regular sets are therefore also referred to as sets with positive reach. Note that convex sets are sets of infinite reach. Since we shall often refer to the reach of $\M$ in the following, we prefer here to use the terminology "positive-reach" over "prox-regular".

\subsubsection{Dynamics on $\M$ as a limit of dynamics on $\R^d$ with confining potential}
One goal of our paper, which we achieve in Section \ref{sec:result-epsilon}, is to show that solutions to the problem in \eqref{eq:main} on the set of positive reach $\M$ can be approximated by solutions to the following problem on all of $\R^d$:
\be\label{eq:main-epsilon}
	\begin{cases}
		\p_t \rho + \dive(\rho u) = 0,\\ u = -\grad W * \rho - \frac1\e \grad d_\M^2,
	\end{cases} \quad \text{on $\R^d$},
\ee
as the parameter $\e$ goes to $0$, where $d_\M$ is the distance to the set $\M$. The term $\tfrac1\e d_\M^2$ plays here the role of a confinement potential, making it expensive for the particles to lie at distances greater than $\sqrt{\varepsilon}$ away from $\M$.

A similar problem has been analyzed in \cite{ABC}, where the authors consider local diffusion and a confinement potential in addition to the nonlocal-interaction potential, and thus study a nonlinear, nonlocal Fokker--Planck equation on $\M$ with no-flux boundary conditions. Although their interaction potentials are allowed to be less regular than ours, the sets they consider have to be $d$-dimensional and connected, which is not our case. In this context, the authors show that the weak formulation for the Fokker--Planck equation on $\M$ can be approximated by the weak formulation on $\R^d$ obtained by continuously extending the confinement potential from $\M$ to all of $\R^d$ in such a way that the potential blows up outside of a tubular neighborhood of $\M$ as this neighborhood shrinks to $\p\M$. Furthermore, their diffusion term needs to be nonzero since their convergence analysis is based on $L^2$ estimates, which differs from our gradient flow approach allowing us to consider nonlocal interaction on its own. Indeed, by regarding our solutions as curves of maximal slope for the respective energies \eqref{eq:energy} and
\bes
    E_\e(\rho) = \frac12 \int_{\R^d}\int_{\R^d} W(x-y) \d\rho(y)\d\rho(x) + \frac1\e \int_{\R^d} d_\M^2(x)\d\rho(x) \quad \text{for all $\rho\in\P(\R^d)$},
\ees 
we are able to use the Sandier--Serfaty result \cite{SS,Serfaty} for the $\Gamma$-convergence of gradient flows to show the convergence of our approximating model \eqref{eq:main-epsilon} to the model on $\M$ given in \eqref{eq:main}.

\subsubsection{Dynamics on $\M$ as a limit of an iterative propagattion--projection scheme} 
An alternative approach to considering \eqref{eq:main-epsilon} in order to approximate solutions to \eqref{eq:main} is using the flow map of  the solution to the classical interaction equation on $\R^d$, i.e., 
\be\label{eq:main-rd}
\begin{cases}
    \p_t \mu + \dive(\mu u) = 0,\\ u = -\grad W * \mu,
\end{cases} \quad \text{on $\R^d$}.
\ee
We recall that this flow map is defined as the map $\Phi \: [0,\infty) \times \R^d \to \R^d$ such that, for all $x_0\in\R^d$, we have
\bes
\begin{cases}
    \frac{\der}{\der t} \Phi_t(x_0) = u(\Phi_t(x_0),t),\\
    \Phi_0(x_0) = x_0.
\end{cases}
\ees
Then, the solution $\mu$ to \eqref{eq:main-rd} is given by
\bes
    \mu(t) = (\Phi_t)_\#\rho^0 \quad \text{for all $t\geq 0$},
\ees
that is, the solution $\mu$ is the pushforward of the initial condition $\rho^0\in\P(\R^d)$ through the flow map $\Phi$; see \cite{AGS}. (Note that here, because of the nonlocality of the interaction velocity field, the flow map may depend on the solution itself so that the previous equation may in fact be an implicit formulation of the solution; we refer the reader to \cite{CCR,BLR} for the existence and uniqueness of explicit pushforward solutions to the interaction equation on $\R^d$.)

We define the approximation scheme by: for $n\geq0$ and small enough time step  $\tau>0$
    \bes
	    \begin{cases} \nu_0^\tau = \rho^0,\\
	    \nu_{n+1}^\tau = (\Pi_\M)_\# ((\Phi_\tau)_\#\nu_n^\tau) \end{cases}
    \ees
where $\Pi_\M$ is the projection to $\M$. The prox-regularity of $\M$ and choosing $\tau$ small enough ensures that the projection is unique. 
By interpolating we build a curve of probability measures supported on $\M$ which we show converges to a solution to \eqref{eq:main} as the size of the time step size vanishes. Our second goal in this paper is to show such convergence, which we do in Section \ref{sec:result-projected}.

Let us also remark that there is an interesting problem related to  \eqref{eq:main} where one studies the fully intrinsic version, where agents on $\M$ interact  according to the intrinsic metric of $\M$ rather than the ambient Euclidean space and therefore "see" each other along the manifold. In \cite{FPP,FHP,FZ19}, the well-posedness theory and long-time behavior (giving rise to asymptotic consensus), as well as numerical experiments, for the fully intrinsic model on the hemisphere, the hyperboloid and the special orthogonal group are carried out. There, the authors consider the case where the gradient and convolution operators are all defined with respect to the manifold $\M$; in particular, \eqref{eq:convolution} is replaced by
 \bes
 	W*_\mathrm{g}\rho(x) = \int_\M W(d_\mathrm{g}(x,y))\d\rho(y) \quad \text{for all $x\in\M$},
 \ees
 where $d_\mathrm{g}$ is the intrinsic, or geodesic, metric on $\M$. 
 There, the obstacles are the lack of convexity of $d_\mathrm{g}^2$ on subsets of $\M$ that are not geodesically convex and the difficulty to compare vectors in different tangent spaces in order to prove Lipschitz continuity of the velocity field for the resolution of the characteristic equations solved by flow maps. We also refer the reader to \cite{HK19,HKLN19,AHPS20} and the references therein for second-order swarming models on the sphere and the hyperboloid.

\subsection{Notation and assumptions}\label{subsec:notation-assumptions}

The functional context in which we study solutions to \eqref{eq:main} and \eqref{eq:main-epsilon} is that of probability measures. We denote by $\P(\R^d)$ the set of Borel probability measures on $\R^d$ and $\P_2(\R^d)$ the subset of $\P(\R^d)$ of measures with finite second moment. We  endow $\P_2(\R^d)$ with the \emph{(quadratic) Wasserstein distance} denoted by $d_2$: for all $\rho_0,\rho_1\in\P_2(\R^d)$ we define
\bes
    d_2(\rho_0,\rho_1) = \inf_{\pi\in\Pi(\rho_0,\rho_1)} \sqrt{\int_{\R^d\times\R^d} |x-y|^2 \d\pi(x,y) }, 
\ees
where $\Pi(\rho_0,\rho_1)$ is the set of transport plans from $\rho_0$ to $\rho_1$. That is, $\Pi(\rho_0,\rho_1)$ is the set of Borel probability measures on $\R^d\times\R^d$ with first marginal $\rho_0$ and second marginal $\rho_1$.

Let $\M\subset \R^d$ satisfy the following assumption:
\begin{hyp}
  \label{hyp:M}
    $\M$ is a compact subset of $\R^d$ with positive reach, denoted $\eta_\M$. 
\end{hyp}
\noindent For instance, $\M$ can be a manifold with $C^2$ boundary; it can also be a lower-dimensional object, such as a circle in $\R^2$ or even a manifold with outside corners as a rectangular sheet in $\R^3$. We write $\P(\M)$ the subset of $\P(\R^d)$ of measures supported in $\M$; since $\M$ is compact we actually have $\P(\M)\subset \P_2(\R^d)$. 

For all $x\in \M$, the projection operator $P_\M(x)$, used  in  \eqref{eq:main}, maps the vectors in $\R^d$ to ``tangent'' vectors to $\M$.  For  $v\in \R^d$ it is defined by
\be\label{eq:projection}
    P_\M(x)(v) = \{ w \in T_x^\mathrm{in}\M \st |v-w| = \inf_{u\in T_x^\mathrm{in}\M} |v-u| \},
\ee
where $T_x^\mathrm{in}\M$ is the \emph{Clarke tangent cone} of $\M$ at $x$, whose definition is
\begin{align*}
T_x^\mathrm{in}\M = \big\{ & v\in\R^d \st \forall\, (t_n)_n\to0^+,\; \forall\, \M\supset (x_n)_n \to x,\; \\
& \exists\, \R^d \supset (v_n)_n\to v,\; \te{ s.t. }\forall\, k,\, x_k + t_k v_k\in\M \big\}.
\end{align*}
So $P_\M(x)$ is the projection on the Clarke tangent cone at $x$. Because this tangent cone is always a closed and convex subset of $\R^d$, the projection in \eqref{eq:projection} is always a singleton, so that $P_\M(x) \: \R^d \to T_x^\mathrm{in}\M$. Using this projection in \eqref{eq:main}, we ensure that particles moving according to \eqref{eq:main} do not leave $\M$. We notice, as expected, that when $\M$ is a $C^1$ manifold without boundary the Clarke tangent cone coincides with the classical linear tangent space, in which case the above projection can be rewritten as
\bes
    P_\M(x)(v) = \begin{cases} v & \text{if $v\in T_x\M$},\\ 
    \Pi_{T_x\M}(v) & \text{otherwise}, \end{cases}
\ees
where $\Pi_{T_x\M}\: \R^d\to T_x\M$ is the projection on $T_x\M$, since in this case $T_x^\mathrm{in}\M = T_x\M$.

For any given functional $F\: \P(\R^d) \to (-\infty,\infty]$ we write $\dom(F)$ its domain, i.e., $\dom(F) = \{ \mu \in \P(\R^d) \st F(\mu) < \infty\}$, and we say that $F$ is proper if $\dom(F) \neq \emptyset$. We define the \emph{interaction energy} $E\: \P_2(\R^d) \to (-\infty,\infty]$ for all $\rho \in \P_2(\R^d)$ by
\be\label{eq:energy-manifold}
	E(\rho) = \begin{cases} \frac12 \int_\M W* \rho(x) \d\rho(x) & \mbox{if $\rho \in \P(\M)$}, \\ \infty & \mbox{otherwise}, \end{cases}
\ee
where $W\: \R^d \to \R$ is an interaction kernel verifying the assumption below:
\begin{hyp}
  \label{hyp:W}
    $W$ is of class $C^2$, is symmetric, semiconvex, bounded from below, and has at-most-quadratic growth at infinity.
\end{hyp}
\noindent Because $W$ is continuous and $\M$ is compact, the domain of $E$ satisfies $\dom(E) = \P(\M)$. We also define, for all $\e>0$, the \emph{$\e$-interaction energy} $E_\e \: \P_2(\R^d) \to (-\infty,\infty]$ by
\be\label{eq:energy-epsilon}
	E_\e(\rho) = \frac12 \ird W* \rho(x) \d\rho(x) + \frac1\e \ird d_\M^2(x) \d\rho(x) \quad \mbox{for all $\rho \in \P_2(\R^d)$},
\ee
where $d_\M(x)$ denotes the distance of any point $x\in\R^d$ to $\M$. As already mentioned above, the $\e$-dependent part of this energy plays the role of a confinement potential. Because $W$ has at-most-quadratic growth at infinity and $\ird d_\M^2(x) \d \rho(x) < \infty$ for all $\rho \in \P_2(\R^d)$, we have $\dom(E_\e) = \P_2(\R^d)$. Note that the condition on the at-most-quadratic growth at infinity on $W$ is not restrictive our case, since our interest lies in what happens within the compact set $\M$ or within bounded regions containing $\M$.

We  study \eqref{eq:main} and its approximation \eqref{eq:main-epsilon} as the gradient flows for the respective energies \eqref{eq:energy-manifold} and \eqref{eq:energy-epsilon}. We refer the reader to \cite{AGS} for an extensive theory of gradient flows, and only introduce the definitions we shall need here in order to prove our main results. We write $AC^2([0,\infty);\P_2(\R^d))$ the set of $2$-absolutely continuous curves from $[0,\infty)$ to $\P_2(\R^d)$, that is, we write $\rho \in AC^2([0,\infty);\P_2(\R^d))$ if there exists a function $m\in L^2(0,\infty)$ such that 
\bes
    d_2(\rho(s),\rho(t)) \leq \int_s^t m(r) \d r \quad \text{for all $0\leq s\leq t$}.
\ees
Given $F\colon \P_2(\R^d)\to(-\infty,\infty]$ proper, a function $g\colon\P_2(\R^d)\to[0,\infty]$ is said to be a \emph{strong upper gradient} for $F$ if for every $\rho\in AC^2([0,\infty);\P_2(\R^d))$ the map $g\circ\rho$ is Borel measurable and
\bes
    |F(\rho(t))-F(\rho(s))| \leq \int_s^t g(\rho(\tau))|\rho'|(\tau) \d\tau \quad \mbox{for all $0\le s\le t$},
\ees
where $|\rho'|$ is the \emph{metric derivative} of $\rho$, that is,
\bes
    |\rho'|(t) = \lim_{h\to0}\frac{d_2(\rho(t),\rho(t+h))}{|h|} \quad\mbox{for all $t\geq0$},
\ees
and a curve $\rho \in AC^2([0,\infty);\P_2(\R^d))$ is a \emph{curve of maximal slope} for $F$ with respect to its strong upper gradient $g$ if and only if $t\mapsto F(\rho_t)$ is nonincreasing and
\bes
	F(\rho(t))-F(\rho(s))+\frac{1}{2}\int_s^t \left( g(\rho(\tau))^2+|\rho'|(\tau)^2 \right) \d\tau \leq 0 \quad \mbox{for all $0\le s\le t$}.
\ees
Also, the \emph{local slope} of $F$ is defined as
\bes
	|\partial F|(\rho) = \begin{cases} \displaystyle \limsup_{\mu \to \rho} \frac{(F(\rho) - F(\mu) )_+}{d_2(\rho,\mu)} & \mbox{for all $\rho\in \dom(F)$},\\ \infty & \mbox{otherwise}, \end{cases}
\ees
where the subscript $+$ denotes the positive part. It can be checked that strong upper gradients for the interaction energies $E$ and $E_\e$, for any $\e>0$, are given by their respective local slopes $|\p E|$ and $|\p E_\e|$, which are also semiconvex and narrowly lower semicontinuous thanks to our assumptions on the interaction kernel $W$; see \cite{AGS}. Recall that the narrow topology on $\P(\R^d)$ is given by the following definition: we say that a sequence $(\rho^n)_n\subset \P(\R^d)$ converges \emph{narrowly} to some $\rho\in\P(\R^d)$ if 
\bes
    \int_{\R^d} f(x) \d\rho^n(x) \to \int_{\R^d} f(x) \d\rho(x) \quad \mbox{as $n\to\infty$ for all $f\colon \R^d\to\R$ continuous and bounded.}
\ees
Following the Sandier--Serfaty theory, we consider gradient flows as curves of maximal slopes with respect to local slopes. Accordingly, we have the following definition:
\begin{defn}[gradient flow]
    We say that a curve in $AC^2([0,\infty);\P_2(\R^d))$ is a
    \emph{gradient flow} for $E$ (respectively, $E_\e$) if it is a curve of maximal slope with respect to $|\p E|$ (respectively, $|\p E_\e|$). For convenience, we shall sometimes refer to gradient flows for $E_\e$ as \emph{$\e$-gradient flows}.
\end{defn}

\subsection{Main results}

The first main result establishes that  the gradient flow of $E_\e$, i.e., \eqref{eq:main-epsilon}, converges as $\e \to 0$ to the gradient flow of $E$, i.e., \eqref{eq:main}. 

\begin{thm}[$\e$-gradient flow scheme]\label{thm:main-epsilon}
Consider$\M$ satisfying Assumption \ref{hyp:M} and a potential $W$ satisfying  Assumption \ref{hyp:W}. Let $\rho^0 \in \P(\M)$.
	For $\e>0$, let $\rho_\e \in AC^2([0,\infty);\P_2(\R^d))$ be an $\e$-gradient flow such that $\rho_\e(0) = \rho_\e^0$ for some $\rho_\e^0 \in \P_2(\R^d)$. Assume that $d_2(\rho_\e^0,\rho^0) \to0$ and $\limsup E_\e(\rho_\e^0) \leq E(\rho^0)$ as $\e\to0$. Then there exists $\rho \in AC^2([0,\infty);\P_2(\R^d))$ such that $\rho(0) = \rho^0$, $\rho(t) \in \P(\M)$ and
%
$d_2(\rho_\e(t),\rho(t)) \to0$ as $\e \to0$ for all $t\geq0$, and such that $\rho$ is a gradient flow for $E$.
\end{thm}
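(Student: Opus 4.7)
The plan is to apply the Sandier--Serfaty variational framework \cite{SS,Serfaty} for $\Gamma$-convergence of gradient flows. Three ingredients suffice: (i) compactness of $\{\rho_\e\}_{\e>0}$ in $C([0,T];\P_2(\R^d))$ with subsequential limit $\rho$ taking values in $\P(\M)$; (ii) a liminf inequality $\int_0^t|\rho'|^2\d\tau\leq\liminf_{\e\to0}\int_0^t|\rho_\e'|^2\d\tau$ for the integrated squared metric derivatives; and (iii) a pointwise-in-time liminf for local slopes, $|\p E|(\rho(t))\leq\liminf_{\e\to0}|\p E_\e|(\rho_\e(t))$. Combined with the well-preparedness hypothesis and narrow lower semicontinuity of $E$, passing to the limit in the energy-dissipation inequality
\bes
    E_\e(\rho_\e(t)) + \tfrac{1}{2}\int_0^t \bigl(|\rho_\e'|^2(\tau) + |\p E_\e|^2(\rho_\e(\tau))\bigr)\d\tau \leq E_\e(\rho_\e^0)
\ees
yields the analogous inequality for $\rho$ with respect to $E$; since $|\p E|$ is a strong upper gradient, $\rho$ must be a curve of maximal slope, hence a gradient flow for $E$. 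Uniqueness of such gradient flows, established in \cite{CSW}, removes the subsequence and delivers convergence along the full family for every $t\geq0$.

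For (i), the energy-dissipation inequality furnishes a uniform $L^2$-bound on $t\mapsto|\rho_\e'|(t)$, hence $d_2$-equicontinuity; the monotonicity $E_\e(\rho_\e(t))\leq E_\e(\rho_\e^0)\leq E(\rho^0)+o(1)$ together with lower-boundedness of $W$ bounds $\tfrac{1}{\e}\ird d_\M^2\d\rho_\e(t)$ uniformly in $\e$ and $t$, forcing $\supp\rho_\e(t)$ to concentrate in an $O(\sqrt{\e})$-tube around the compact set $\M$ and yielding tightness. An Arzel\`a--Ascoli argument in $(\P_2(\R^d),d_2)$ produces a subsequential limit $\rho\in AC^2([0,\infty);\P_2(\R^d))$ with $\rho(t)\in\P(\M)$ for every $t$. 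Statement (ii) is standard (see \cite[Ch.~5]{AGS}), via narrow continuity and lower semicontinuity of $d_2$.

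The main obstacle is (iii). Since $W$ is $C^2$ and semiconvex, and $d_\M^2$ is $C^{1,1}$ inside the reach tube with $\grad d_\M^2(x)=2(x-\Pi_\M(x))$, the $\e$-slope admits the $L^2(\mu)$ representation $|\p E_\e|^2(\mu)=\ird|v_\e(x)|^2\d\mu(x)$ with $v_\e(x)=-\grad W*\mu(x)-\tfrac{1}{\e}\grad d_\M^2(x)$, while by \cite{WuSle15,CSW}, $|\p E|^2(\rho)=\int_\M|P_\M(x)(-\grad W*\rho(x))|^2\d\rho(x)$ for $\rho\in\P(\M)$. The key is the variational representation of the projection onto the closed convex cone $T_x^{\mathrm{in}}\M$,
\bes
    |P_\M(x)(u)|^2 = \sup_{w\in T_x^{\mathrm{in}}\M}\bigl(2\ap{u,w}-|w|^2\bigr),
\ees
from which it suffices to prove, for every test field $\phi\in L^2(\rho;\R^d)$ with $\phi(x)\in T_x^{\mathrm{in}}\M$ $\rho$-a.e., the inequality
\bes
    \int_\M\bigl(2\ap{-\grad W*\rho,\phi}-|\phi|^2\bigr)\d\rho \leq \liminf_{\e\to0}\ird|v_\e|^2\d\rho_\e.
\ees
Extending $\phi$ to the reach tube by $\tilde\phi(x)=\phi(\Pi_\M(x))$ and using the pointwise bound $|v_\e|^2\geq 2\ap{v_\e,\tilde\phi}-|\tilde\phi|^2$, the convolution part passes to the limit by uniform convergence of $\grad W*\rho_\e$ to $\grad W*\rho$ on compact sets (since $W\in C^2$ and $\rho_\e\to\rho$ narrowly), whereas the potentially troublesome confinement contribution
\bes
    -\tfrac{2}{\e}\ap{\grad d_\M^2(x),\tilde\phi(x)} = -\tfrac{4}{\e}\ap{x-\Pi_\M(x),\phi(\Pi_\M(x))} \geq 0
\ees
is \emph{nonnegative}, because $x-\Pi_\M(x)$ lies in the (Clarke) normal cone at $\Pi_\M(x)$, which is polar to $T_{\Pi_\M(x)}^{\mathrm{in}}\M$. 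Prox-regularity of $\M$ (Assumption \ref{hyp:M}) is essential here: it ensures that $\Pi_\M$ is single-valued and Lipschitz on the reach tube and that the normal-cone identification holds, so the above test-field argument is well posed and continuous in $x$.
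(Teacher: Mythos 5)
Your overall architecture is exactly the paper's: Sandier--Serfaty with (i) compactness, (ii) the liminf for metric derivatives, and (iii) the liminf for slopes, plus uniqueness from \cite{CSW} to upgrade subsequential convergence. Items (i) and (ii) match the paper's proof essentially step for step (one small misstatement: the bound $\frac1\e\ird d_\M^2\d\rho_\e\leq C$ controls $\ird d_\M^2\d\rho_\e$, not the \emph{support} of $\rho_\e(t)$, which need not lie in any $O(\sqrt{\e})$-tube; tightness comes from the uniform second-moment bound, as in the paper). Where you genuinely diverge is (iii): you work with $L^2$ vector-field representations of the slopes and a cone-polarity sign argument for the confinement term, whereas the paper stays at the level of difference quotients, building for each competitor $\nu_\e^n$ of the projected measure $\hat\rho_\e=(\Pi_\M)_\#\sigma_\e$ a competitor $\mu_\e^n$ of $\rho_\e$ via a three-marginal plan and the map $(x,p,q)\mapsto x+q-p$, and then invoking only the $d_2$-lower semicontinuity of $|\p E|$. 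Your sign observation ($x-\Pi_\M(x)$ polar to $T^{\mathrm{in}}_{\Pi_\M(x)}\M$) is the dual of the paper's key estimate $d_\M(x+q-p)\leq|x-p|$, so the two arguments are close in spirit.

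However, your version of (iii) has three concrete gaps. First, the identity $|\p E_\e|^2(\mu)=\ird|v_\e|^2\d\mu$ with $v_\e=-\grad W*\mu-\tfrac1\e\grad d_\M^2$ is ill-posed wherever $\mu$ charges $\R^d\setminus\M_r$: there $\Pi_\M$ is multivalued and $\grad d_\M^2$ does not exist ($d_\M^2$ is only semiconcave outside the reach tube), and the energy bound does not prevent $\rho_\e(t)$ from carrying a small but positive mass there. You must split off $\rho_\e\restrict{\R^d\setminus\M_r}$ and handle it separately (the paper does this by leaving that mass fixed in the competitor construction); at minimum you should replace the equality by the one-sided inequality $|\p E_\e|(\rho_\e)\geq$ the tested quantity, obtained from one-sided directional derivatives of the semiconcave function $d_\M^2$ along push-forwards $(\id+t\tilde\phi)_\#\rho_\e$. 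Second, your duality step takes a supremum over test fields $\phi$ with $\phi(x)\in T_x^{\mathrm{in}}\M$, but to pass to the limit in $\ird\langle-\grad W*\rho_\e,\tilde\phi\rangle\d\rho_\e$ under narrow convergence you need $\tilde\phi$ continuous, while the optimal field $x\mapsto P_\M(x)(-\grad W*\rho(x))$ and the cone map $x\mapsto T_x^{\mathrm{in}}\M$ are in general only measurable (think of corners); you need a density/selection argument showing that continuous tangent fields realize the full supremum in $L^2(\rho)$, and this is not automatic. Third, even granting the previous two points, you obtain $\liminf_\e|\p E_\e|(\rho_\e)\geq\|P_\M(-\grad W*\rho)\|_{L^2(\rho)}$, and you still need $\|P_\M(-\grad W*\rho)\|_{L^2(\rho)}\geq|\p E|(\rho)$, i.e.\ the upper bound in the slope characterization of the \emph{constrained} energy $E$; this is a nontrivial theorem of \cite{CSW} (it uses the positive-reach inequality of Lemma \ref{lem:positive-reach-ineq}), not a formality, and the paper's competitor construction is designed precisely to avoid needing it, requiring instead only the elementary lower semicontinuity $\liminf_\e|\p E|(\hat\rho_\e)\geq|\p E|(\rho)$. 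If you fill these three gaps (or import the third from \cite{CSW} explicitly), your route goes through.
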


Theorem \ref{thm:main-epsilon} contains a compactness part, in which we prove that any $\e$-gradient flow has a limit, and a convergence part, where we prove that this limit is indeed a gradient flow for $E$. We prove the convergence part of the main result using the Sandier--Serfaty approach, which we recall in Theorem \ref{thm:ss}.
We note that while each of the energies $E_\e$ is semiconvex, the semiconvexity diverges to $-\infty$ as $\e \to 0$, so the convergence does not follow directly from stability of gradient flows. 
On the other hand, $\e$-gradient flows exist thanks to the semiconvexity and quadratic growth at infinity of the interaction potential $W$ (cf. \cite[Corollary 3.2]{DSNotes} for instance). Hence the above theorem ensures the existence of gradient flows for the energy in $E$ given in \eqref{eq:energy}. This fact was already proved in \cite{CSW} via a more classical tool, namely differential inclusion theory. For completeness here, we recall a stability result from \cite{CSW} in Proposition \ref{prop:stability}, which proves uniqueness.
\medskip

The second question we answer  is whether one can use the full-space equation \eqref{eq:main-rd} to approximate the projected version \eqref{eq:main}. We show this is indeed the case:
\begin{thm}[projected gradient flow scheme]\label{thm:main-projected}
Consider$\M$ satisfying Assumption \ref{hyp:M} and a potential $W$ satisfying  Assumption \ref{hyp:W}. 
    Let $\rho^0\in\P(\M)$. For all $t\geq0$, let $\Phi_t\: \R^d \to \R^d$ be the flow map associated to the $\R^d$ nonlocal-interaction equation \eqref{eq:main-rd} and let $\rho\in AC^2([0,\infty),\P(\M))$ be the gradient flow for $E$, i.e. the solution of \eqref{eq:main}, with initial condition $\rho(0) = \rho^0$. Define, for any positive integer $n$ and time step  $\tau>0$ small enough,
    \bes
	    \begin{cases} \nu_0^\tau = \rho^0,\\
	    \nu_{n+1}^\tau = (\Pi_\M)_\# ((\Phi_\tau)_\#\nu_n^\tau), \end{cases}
    \ees
    where $r<\eta_\M$ and $\Pi_\M\:\M_r\to\M$ is the projection on the set $\M$ from the $r$-neighborhood $\M_r:=\{x\in\R^d\st d_\M(x)<r\}$ of $\M$. Define the interpolation
    \bes
	    \begin{cases} \nu^\tau(0) = \rho^0,\\
	    \nu^\tau(t) = (\Pi_\M)_\# ((\Phi_{t-n\tau})_\# \nu_n^\tau) & \mbox{for all $t\in(n\tau,(n+1)\tau]$}. \end{cases}
    \ees
    Then, for all $t\geq0$ we have $d_2(\nu^\tau(t),\rho(t)) \to 0$ as $\tau\to0$.
\end{thm}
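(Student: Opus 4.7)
The plan is to follow the classical compactness-plus-identification strategy for time-discrete schemes: (i) show that the scheme is well-defined and that the interpolant $\nu^\tau$ is equicontinuous in the Wasserstein metric uniformly in small $\tau$; (ii) extract a subsequential limit $\tilde\rho \in C([0,\infty);\P(\M))$ by Arzel\`a--Ascoli; (iii) identify $\tilde\rho$ as a gradient flow for $E$, i.e., a solution to \eqref{eq:main}; and (iv) invoke the uniqueness of gradient flows recalled in Proposition \ref{prop:stability} to conclude $\tilde\rho = \rho$ and upgrade the subsequential convergence to convergence of the full family.

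For step (i), I would first note that since $W\in C^2$ has at-most-quadratic growth and each $\nu_n^\tau$ is supported in the compact set $\M$, the velocity field $u_n(y,s) = -\grad W * \mu_n(s)(y)$ driving the flow $\Phi_s$ from $\nu_n^\tau$ (with $\mu_n(s) = (\Phi_s)_\#\nu_n^\tau$) is uniformly bounded, say by $K$, and Lipschitz on any bounded region containing $\M$. Consequently $|\Phi_\tau(x) - x| \leq K\tau$, so $(\Phi_\tau)_\# \nu_n^\tau$ is supported in the tube $\M_{K\tau}$. For $\tau$ small enough that $K\tau < r < \eta_\M$, this places the support inside $\M_r$, where $\Pi_\M$ is well-defined and $\tfrac{\eta_\M}{\eta_\M - r}$-Lipschitz. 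The scheme therefore inducts into $\P(\M)$, and pushing forward the obvious transport plan via $\Pi_\M \circ \Phi_\tau$ gives a per-step bound $d_2(\nu_n^\tau, \nu_{n+1}^\tau) \leq C\tau$, hence Lipschitz equicontinuity of $\nu^\tau$ in time. Combined with tightness (the supports lie in a fixed tubular neighbourhood of $\M$), Arzel\`a--Ascoli furnishes step (ii).

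The heart of the proof is step (iii). The key ingredient is the first-order expansion, uniform for $(x,v)$ in compact subsets of $\M \times \R^d$:
\bes
    \Pi_\M(x + \tau v) = x + \tau P_\M(x)(v) + o(\tau) \quad \text{as } \tau \to 0^+.
\ees
This reflects the fact that, on a positive-reach set, the metric projection has the projection onto the Clarke tangent cone as its one-sided directional derivative at boundary points. Combined with $\Phi_\tau(x) = x - \tau \grad W * \nu_n^\tau(x) + O(\tau^2)$ (from a Taylor expansion using the bounded, Lipschitz velocity), this exhibits the one-step map as an explicit Euler step for \eqref{eq:main}:
\bes
    \Pi_\M(\Phi_\tau(x)) = x + \tau P_\M(x)\bigl(-\grad W * \nu_n^\tau(x)\bigr) + o(\tau).
\ees
Testing against $\phi \in C^1_c(\R^d)$, summing over the time steps in $[s,t]$, and passing to the limit along the convergent subsequence yields the weak formulation
\bes
    \int \phi\, d\tilde\rho(t) - \int \phi\, d\tilde\rho(s) = \int_s^t \int_\M \grad\phi(x)\cdot P_\M(x)\bigl(-\grad W * \tilde\rho(r)(x)\bigr)\, d\tilde\rho(r)(x)\, dr,
\ees
which identifies $\tilde\rho$ as a solution of \eqref{eq:main}. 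Step (iv) then closes the argument.

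The main technical obstacle is the passage to the limit in the nonlinear term involving $P_\M$: the map $x \mapsto P_\M(x)(w)$ is, in general, only upper-semicontinuous, as the Clarke tangent cone $T_x^\mathrm{in}\M$ varies discontinuously at corners or lower-dimensional strata of $\M$. I would handle this by combining the uniform convergence of the smooth field $\grad W * \nu^\tau$ to $\grad W * \tilde\rho$ on $\M$ with a semicontinuity argument built on the identity $|P_\M(x)(w)|^2 + \dist(w, T_x^\mathrm{in}\M)^2 = |w|^2$ and the closed-graph property of the set-valued map $x \mapsto T_x^\mathrm{in}\M$ on positive-reach sets. A secondary technicality is obtaining a uniform $o(\tau)$ remainder in the expansion of $\Pi_\M$, which follows from the compactness of $\M$, the Lipschitz regularity of $\Pi_\M$ on $\M_r$, and an equi-differentiability argument at points of $\M$.
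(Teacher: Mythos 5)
Your route (compactness of the interpolants, identification of the limit as a weak solution via a one-step Euler expansion, then uniqueness) is genuinely different from the paper's, which never passes to a subsequential limit: it proves a quantitative local truncation error $d_2(\nu^\tau(t),\rho(t))\leq Ct^2$ on $[0,\tau]$ by a Gronwall estimate on $h^2(t)=\int|\Pi_\M(\Phi_t(x))-\Psi_t(x)|^2\d\rho^0(x)$, and then concatenates steps using the stability estimate of Proposition \ref{prop:stability} to get a global $O(\tau)$ rate. The decisive technical device there is the hypomonotonicity inequality of Lemma \ref{lem:positive-reach-ineq}: the discrepancy $P_x(w)-P^\mathrm{T}_x(w)$ lies in the proximal normal cone $N_x\M$, so pairing it against the difference of two points of $\M$ costs only a term quadratic in the error, which Gronwall absorbs. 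No pointwise differentiability of $\Pi_\M$ is ever invoked.

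This matters because the step you rely on is not merely a ``secondary technicality'': the uniform expansion
\begin{equation}
\Pi_\M(x+\tau v)=x+\tau P_\M(x)(v)+o(\tau)\quad\text{uniformly in }x\in\M
\end{equation}
is false, and no equi-differentiability argument can rescue it. Take $\M=[-1,1]\subset\R$, $v=1$, and $x=1-\tau/2$. Since $x$ is interior, $T_x^{\mathrm{in}}\M=\R$ and $P_\M(x)(v)=v=1$, yet $\Pi_\M(x+\tau)=1=x+\tau/2$, so the remainder equals $-\tau/2$ exactly. The same happens for any full-dimensional $\M$ at interior points within distance $O(\tau)$ of $\p\M$ with outward-pointing velocity, and near any stratum where $T_x^{\mathrm{in}}\M$ jumps. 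Pointwise the one-sided directional derivative does exist, but the convergence degenerates precisely on an $O(\tau)$-neighborhood of the singular set, so each time step contributes an $O(\tau)$ (not $o(\tau)$) error there; summing $t/\tau$ steps, your weak-formulation identity acquires an a priori $O(1)$ error unless you additionally control, at every step, the $\nu_n^\tau$-mass of particles sitting in that bad zone with outward velocity (morally each particle crosses only finitely often, but for curved boundaries and tangential drift this needs a real argument). A secondary, smaller issue is that identifying the limit as a distributional solution of \eqref{eq:main} is not the same as identifying it as a curve of maximal slope for $E$, which is what Proposition \ref{prop:stability} is stated for; you would need to invoke the equivalence from \cite{CSW} to close step (iv). I would recommend replacing the expansion by the normal-cone estimate of Lemma \ref{lem:positive-reach-ineq} (or by the paper's one-step Gronwall comparison), which turns the problematic first-order discrepancy into a quadratic one and removes the need for any uniformity of the directional derivative of $\Pi_\M$.
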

The proof of Theorem \ref{thm:main-projected} is based on an estimate of the rate of convergence of the approximating sequence $(\nu^\tau)_\tau$ to the solution $\rho$ to \eqref{eq:main}, which turns out to be linear in the time step size $\tau$. Furthermore, what is meant by any time step size ``small enough'' will be made clear in the proof.

In the following, we present the proofs of Theorems \ref{thm:main-epsilon} and \ref{thm:main-projected} in Sections \ref{sec:result-epsilon} and \ref{sec:result-projected}, respectively, and our numerical experiments in Section \ref{sec:exp}. Our code is fully accessible on our \href{https://github.com/francesco-patacchini/interaction-equation-attracting-manifolds}{GitHub} repository \cite{GitHub}.

\section{$\e$-gradient flow scheme (proof of Theorem \ref{thm:main-epsilon})}\label{sec:result-epsilon}

\subsection{$\Gamma$-convergence}

Before giving the core of the proof of Theorem \ref{thm:main-epsilon}, we show the $\Gamma$-convergence of the $\e$-energy $E_\e$ towards $E$ as $\e\to0$. This result is used throughout the paper.
\begin{lem}
  \label{lem:gamma-conv}
	The sequence $(E_\e)_\e$ $\Gamma$-converges to $E$ with respect to the narrow topology as $\e \to0$.
\end{lem}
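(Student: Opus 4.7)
The plan is to verify the two standard pieces of $\Gamma$-convergence separately, namely the liminf inequality and the existence of a recovery sequence, both with respect to narrow convergence on $\P(\R^d)$.

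For the liminf inequality, suppose $\rho_\e \to \rho$ narrowly as $\e \to 0$; I want $\liminf_{\e\to 0} E_\e(\rho_\e) \geq E(\rho)$. If $\liminf E_\e(\rho_\e) = \infty$ there is nothing to show, so pass to a subsequence (not relabeled) along which $E_\e(\rho_\e) \to L < \infty$. Since $W$ is bounded below by some $m\in\R$ by Assumption \ref{hyp:W}, the interaction part $\tfrac12 \iint W(x-y)\d\rho_\e(y)\d\rho_\e(x)$ is bounded below by $m/2$, and therefore $\tfrac1\e \int d_\M^2\d\rho_\e \leq L - m/2 + o(1)$. Consequently $\int d_\M^2\d\rho_\e \leq C\e \to 0$. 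Because $d_\M^2$ is continuous and nonnegative, Portmanteau's theorem (applied to the nonnegative lower semicontinuous function $d_\M^2$) gives $\int d_\M^2 \d\rho \leq \liminf_\e \int d_\M^2\d\rho_\e = 0$, so $\rho$ is supported on $\M$, i.e.\ $\rho \in \P(\M)$. Next, the product measures $\rho_\e\otimes\rho_\e$ converge narrowly to $\rho\otimes\rho$, and $(x,y)\mapsto W(x-y)-m$ is continuous and nonnegative, so Portmanteau again yields
\bes
    \liminf_{\e\to0}\, \tfrac12 \iint W(x-y)\d\rho_\e(y)\d\rho_\e(x) \;\geq\; \tfrac12 \iint W(x-y)\d\rho(y)\d\rho(x) = E(\rho).
\ees
Since $\tfrac1\e \int d_\M^2\d\rho_\e \geq 0$, this gives $\liminf_\e E_\e(\rho_\e) \geq E(\rho)$, as required.

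For the recovery sequence, given $\rho \in \P_2(\R^d)$, there are two cases. If $\rho \notin \P(\M)$ then $E(\rho) = \infty$ and any narrowly convergent sequence works. If $\rho \in \P(\M)$ then $d_\M \equiv 0$ on $\supp\rho$, so setting $\rho_\e := \rho$ for every $\e>0$ yields $\int d_\M^2\d\rho_\e = 0$ and hence $E_\e(\rho_\e) = \tfrac12 \iint W(x-y)\d\rho(y)\d\rho(x) = E(\rho)$. Thus $\limsup_{\e\to0} E_\e(\rho_\e) = E(\rho)$, concluding the proof.

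The only mildly delicate step is the use of the one-sided Portmanteau inequality for the unbounded continuous function $d_\M^2$; this is standard and can also be justified by truncation $d_\M^2 \wedge k$ followed by monotone convergence as $k\to\infty$.
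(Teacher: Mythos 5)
Your proof is correct and follows essentially the same route as the paper: a constant recovery sequence for the limsup inequality, and Portmanteau-based lower semicontinuity of the interaction term plus blow-up of the confinement term for the liminf. The only cosmetic difference is that you deduce $\supp\rho\subset\M$ from finiteness of the liminf, whereas the paper organizes the same facts as a case split on whether $\supp\rho\subset\M$.
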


\begin{proof}
We first show the liminf inequality and then the limsup inequality.

\medskip
\emph{Step 1: liminf inequality.} Let $\rho \in \P_2(\R^d)$ and $(\rho_\e)_\e$ be a sequence in $\P_2(\R^d)$ such that $\rho_\e \wto \rho$ narrowly as $\e\to0$. We want to show
\bes
	\liminf_{\e\to0} E_\e(\rho_\e) \geq E(\rho).
\ees
Suppose first that $\supp(\rho) \subset \M$. We have $E_\e(\rho_\e) \geq \frac12 \ird W*\rho_\e(x) \d \rho_\e(x)$, by the continuity and boundedness from below of $W$ and the Portmanteau theorem (cf. \cite[Theorem 2.1]{Billingsley}),
\bes
	\liminf_{\e \to 0} \ird W*\rho_\e(x) \d \rho_\e(x) \geq \ird W*\rho(x) \d \rho(x) = \int_{\M} W*\rho(x) \d \rho(x).
\ees
Hence,
\bes
	\liminf_{\e \to 0} E_\e(\rho_\e) \geq \frac12 \int_{\M} W*\rho(x) \d \rho(x) = E(\rho).
\ees
Suppose now that $\supp(\rho) \not \subset \M$. We have $E_\e(\rho_\e) \geq \frac1\e \ird d_\M(x)^2 \d \rho_\e(x)$ because $W\geq0$. Also, by continuity and boundedness from below of $d_\M^2$, we have, by the Portmanteau theorem,
\bes
	\liminf_{\e\to0} \ird d_\M(x)^2 \d \rho_\e(x) \geq \ird d_\M(x)^2 \d \rho(x) > 0,
\ees
which yields $\tfrac1\e \ird d_\M(x)^2 \d \rho_\e(x) \to \infty$ as $\e\to0$, and so
\bes
	\lim_{\e\to0} E_\e(\rho_\e) = \infty= E(\rho).
\ees

\medskip
\emph{Step 2: limsup inequality.} Let $\rho \in \P_2(\R^d)$. We want to show that there exists $(\rho_\e)_\e$, a sequence in $\P_2(\R^d)$, such that $\rho_\e\wto \rho$ narrowly as $\e\to0$ and
\bes
	\limsup_{\e\to0} E_\e(\rho_\e) \leq E(\rho).
\ees
Choose the constant sequence $(\rho_\e)_\e$ given by $\rho_\e = \rho$ for all $\e>0$. Suppose first that $\supp(\rho) \subset \M$. Then $\ird d_\M(x)^2 \d \rho_\e(x) = 0$ for all $\e>0$. Therefore $E_\e(\rho_\e) = E(\rho)$
which trivially implies the limsup inequality. Suppose now that $\supp(\rho) \not\subset \M$. As for the liminf inequality we conclude that
\bes
	\lim_{\e\to0} E_\e(\rho_\e) = \infty= E(\rho). \qedhere
\ees
\end{proof}

As an interesting corollary of the $\Gamma$-convergence we have the following.
\begin{lem}\label{lem:mins}
	For all $\e>0$ there exists a minimizer $\rho_\e^*$ of $E_\e$. Moreover, $\rho_\e^*$ converges in Wasserstein metric, along a subsequence, to some $\rho^*\in \P(\M)$, which is a minimizer of $E$.
\end{lem}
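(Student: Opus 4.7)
The plan is to split the statement into two parts: existence of minimizers of $E_\e$ for each fixed $\e$, and then convergence of the minimizers as $\e\to 0$ via the already-proven $\Gamma$-convergence (Lemma \ref{lem:gamma-conv}).

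For the existence of $\rho_\e^*$, I would use the direct method of the calculus of variations. Let $(\rho^n)_n\subset \P_2(\R^d)$ be a minimizing sequence for $E_\e$. Using that $W\geq -M$ for some $M>0$ (Assumption \ref{hyp:W}), the bound $E_\e(\rho^n)\leq C$ forces $\tfrac1\e\int d_\M^2 \d\rho^n \leq C+M$. Since $\M$ is compact (contained in some ball $B(0,R)$), the elementary inequality $d_\M^2(x)\geq \tfrac12|x|^2-R^2$ for $|x|$ large implies that the second moments of $\rho^n$ are uniformly bounded, hence $(\rho^n)_n$ is tight. Extracting a narrowly convergent subsequence $\rho^n\wto \rho_\e^*$ and invoking the lower semicontinuity of $E_\e$ with respect to narrow convergence (which follows from the continuity and lower-boundedness of $W$ and of $d_\M^2$, together with the Portmanteau theorem on the product space $\R^d\times\R^d$ for the interaction part) yields that $\rho_\e^*$ is a minimizer.

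For the convergence statement, I would first obtain a uniform energy bound on the minimizers. Picking any $\rho\in \P(\M)$ (e.g.\ a Dirac mass at a point of $\M$), we have $E_\e(\rho_\e^*)\leq E_\e(\rho)=E(\rho)\leq C_0$, where $C_0$ depends only on $\M$ and $W$ (both of which are compact and continuous, respectively). This gives $\int d_\M^2\d\rho_\e^*\leq C_1\e\to 0$. Combined with a Chebyshev argument, this shows $\rho_\e^*(\{d_\M>\delta\})\leq C_1\e/\delta^2$ for any $\delta>0$, so the family $(\rho_\e^*)_\e$ is tight and has all narrow cluster points supported on $\M$. Extract a subsequence with $\rho_\e^*\wto \rho^*$ narrowly, and note that by lower semicontinuity $\int d_\M^2\d\rho^*=0$, so $\rho^*\in \P(\M)$.

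To upgrade to Wasserstein convergence: since $\M$ is compact and for any $R>0$ we have $\rho_\e^*(\{|x|>R+\diam(\M)+|x_0|\}) \leq \tfrac{1}{R^2}\int d_\M^2\d\rho_\e^* \leq C_1\e/R^2$ (for $x_0\in\M$ fixed), the family $(\rho_\e^*)_\e$ has uniformly integrable second moments. Together with narrow convergence, this is equivalent to $d_2(\rho_\e^*,\rho^*)\to 0$. Finally, that $\rho^*$ minimizes $E$ is the standard consequence of $\Gamma$-convergence plus compactness of minimizers: for any competitor $\rho\in\P(\M)$, the constant recovery sequence gives $E_\e(\rho)=E(\rho)$, so by the liminf inequality of Lemma \ref{lem:gamma-conv},
\[
E(\rho^*)\leq \liminf_{\e\to 0} E_\e(\rho_\e^*) \leq \liminf_{\e\to 0} E_\e(\rho)= E(\rho).
\]
The main (minor) obstacle is the passage from narrow to Wasserstein convergence, which is handled cleanly via the concentration estimate $\int d_\M^2\d\rho_\e^*=O(\e)$; everything else is a direct application of the direct method and of the $\Gamma$-convergence already established.
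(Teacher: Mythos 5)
Your proof is correct and follows essentially the same route as the paper: the direct method with a uniform second-moment bound (via $d_\M(x)\gtrsim |x|$ for large $|x|$) for existence, and the $\Gamma$-convergence of Lemma \ref{lem:gamma-conv} together with Prokhorov's theorem for the limiting minimizer. If anything, your upgrade from narrow to Wasserstein convergence via the concentration estimate $\int d_\M^2\,\d\rho_\e^* = O(\e)$ is justified more carefully than in the paper, which only cites a uniform second-moment bound at that step.
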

\begin{proof}
	Let us first prove the existence of $\rho_\e^*$ for all $\e>0$. Let $\e>0$ and let $(\rho_\e^n)_n \subset \P_2(\R^d)$ be a minimizing sequence for $E_\e$. Because $E_\e \leq E$ is proper, we then know there exists $A_\e>0$, independent of $n$, such that $E_\e(\rho_\e^n) \leq A_\e$ for all $n\in\N$ large enough. Choose any such $n$. Note that by the definition of $d_\M$ we have $d_\M(x) + L_\M \geq |x|$, so that $d_\M(x)^2 \geq |x|^2 - 2L_\M |x|$ for all $x\in\R^d$, where $L_\M$ is the diameter of $\M$. Therefore, by the nonnegativity of $W$ and Jensen's inequality,
\begin{align*}
	A_\e \geq E_\e(\rho_\e^n) &\geq \frac1\e \left( \ird |x|^2 \d\rho_\e^n(x) -2L_\M \ird |x| \d \rho_\e^n(x) \right)\\
	&\geq \frac1\e \left( \ird |x|^2 \d\rho_\e^n(x) -2L_\M \sqrt{\ird |x|^2 \d \rho_\e^n(x)} \right)\\
	&\geq \frac1\e \left (\sqrt{\ird |x|^2 \d\rho_\e^n(x)} - 2L_\M \right)^2 - \frac{4L_\M^2}{\e}.
\end{align*}
This shows that the second moment of $\rho_\e^n$ is uniformly bounded in $n$. By Prokhorov's theorem we deduce the existence of $\rho_\e^* \in \P_2(\R^d)$ such that, up to a subsequence, $\rho_\e^n \wto \rho_\e^*$ narrowly as $n\to\infty$. By narrow lower semicontinuity of $E_\e$ (since $W$ and $d_\M$ are continuous and bounded from below; see again the Portmanteau theorem) we therefore get that $\rho_\e^*$ is a minimizer of $E_\e$. 
	
Let us now prove the existence of $\rho^*$ so that $\rho_\e^* \wto \rho^*$ narrowly as $\e\to0$. Pick $\mu \in \P(\M)$. By the $\Gamma$-convergence of $(E_\e)_\e$ and the minimality of $(\rho_\e^*)_\e$ we know there exists $(\mu_\e)_\e \subset \P_2(\R^d)$ such that $\mu_\e \wto \mu$ narrowly as $\e\to0$ and
\bes
	\limsup_{\e\to0} E_\e(\rho_\e^*) \leq \limsup_{\e\to0} E_\e(\mu_\e) \leq E(\mu) < \infty.
\ees
Therefore, there exists $A_0>0$, independent of $\e$, such that $E_\e(\rho_\e^*) \leq A_0$ for all $\e>0$ small enough. Choose any such $\e$. By a similar argument as above, assuming without loss of generality that $\e<1$, we have
\bes
	A_0 \geq E_\e(\rho_\e^*) \geq \left (\sqrt{\ird |x|^2 \d\rho_\e(x)} - 2L_\M \right)^2 - 4L_\M^2,
\ees
from which we get the existence of $M_0>0$ such that
\begin{equation}
  \label{eq:moment-bound-0}
  \ird \abs{x}^2 \d\rho_\e(t,x) \leq M_0 \quad \mbox{for all $t\geq0$}.
\end{equation}
This shows that the second moment of $\rho_\e^*$ is bounded from above uniformly in $\e$. By Prokhorov's theorem we deduce the existence of $\rho^* \in \P_2(\R^d)$ such that $\rho_\e^* \wto \rho^*$ as $\e\to0$ along a subsequence. By narrow $\Gamma$-convergence, we obtain
\bes
	E(\rho^*) \leq \liminf_{\e\to0} E_\e(\rho_\e^*) \leq \limsup_{\e\to0} E_\e(\rho_\e^*) \leq \limsup_{\e\to0} E_\e(\mu_\e) \leq E(\mu) < \infty.
\ees
Hence $\rho^*\in \dom(E) = \P(\M)$ and $\rho^*$ is a minimizer of $E$. Finally, by \eqref{eq:moment-bound-0} and \cite[Proposition 7.1.5]{AGS}, we also conclude that $d_2(\rho_\e^*,\rho^*)\to0$.
\end{proof}

\subsection{Compactness}

We give here the proof of the compactness part of Theorem \ref{thm:main-epsilon}.

\begin{lem}\label{lem:compactness}
	Let $(\rho_\e)_\e$ and $\rho^0$ be as in Theorem \ref{thm:main-epsilon}. Then there is $\rho \in AC^2([0,\infty);\P_2(\R^d))$ such that $\rho(0) = \rho^0$, $\rho(t) \in \P(\M)$ for all $t\geq0$ and, along a subsequence,
\bes
	d_2(\rho_\e(t),\rho(t)) \to0  \quad \; \te{for all } t \geq 0 \quad \te{as } \e \to0.
\ees
\end{lem}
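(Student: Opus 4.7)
My plan is to derive uniform estimates from the maximal-slope inequality, extract a subsequential limit by a refined Ascoli--Arzel\`a argument in the narrow topology, and upgrade the convergence to Wasserstein using that the mass of $\rho_\e(t)$ concentrates on the compact set $\M$. First, using
\[
E_\e(\rho_\e(t)) + \tfrac12 \int_0^t \bigl(|\p E_\e|^2(\rho_\e(s)) + |\rho_\e'|^2(s)\bigr) \d s \leq E_\e(\rho_\e^0)
\]
together with the hypothesis $\limsup E_\e(\rho_\e^0) \leq E(\rho^0) < \infty$, I obtain three uniform estimates: $E_\e(\rho_\e(t)) \leq C$ for $\e$ small; using that $W$ is bounded below by some $c_W$ (Assumption \ref{hyp:W}), the concentration estimate $\ird d_\M^2 \d\rho_\e(t) \leq C'\e$ holds uniformly in $t \geq 0$; and $\int_0^\infty |\rho_\e'|^2 \d s \leq C''$ uniformly in $\e$. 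Combining the concentration bound with the triangle inequality $|x| \leq d_\M(x) + L_\M$, where $L_\M = \sup_\M |y|$, gives a uniform second-moment bound on $\rho_\e(t)$ and hence uniform tightness by Markov, while the $L^2$ bound on metric derivatives gives the Wasserstein H\"older equicontinuity $d_2(\rho_\e(s), \rho_\e(t)) \leq \sqrt{C''}\,|t-s|^{1/2}$.

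With tightness at each $t$ and uniform equicontinuity in $d_2$, I apply the refined Ascoli--Arzel\`a theorem \cite[Proposition 3.3.1]{AGS} to obtain a subsequence $\e_n \to 0$ and a curve $\rho\colon [0,\infty) \to \P_2(\R^d)$ such that $\rho_{\e_n}(t) \to \rho(t)$ narrowly for every $t \geq 0$. The $AC^2$ regularity of $\rho$ follows by passing to the liminf in $d_2(\rho_{\e_n}(s), \rho_{\e_n}(t)) \leq \int_s^t |\rho_{\e_n}'|(r) \d r$, using narrow lower semicontinuity of $d_2$ and weak $L^2$-compactness of $(|\rho_{\e_n}'|)_n$. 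The containment $\rho(t) \in \P(\M)$ comes from narrow lower semicontinuity of $\mu \mapsto \ird d_\M^2 \d\mu$ (Portmanteau, since $d_\M^2$ is continuous and nonnegative) combined with the concentration bound: $\ird d_\M^2 \d\rho(t) \leq \liminf_n \ird d_\M^2 \d \rho_{\e_n}(t) = 0$.

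The main obstacle is upgrading narrow convergence to Wasserstein convergence at each $t$, since narrow convergence alone does not control second moments. Here the concentration bound is crucial: for any $K > L_\M$, I bound the tail
\[
\int_{\{|x|>K\}} |x|^2 \d \rho_{\e_n}(t) \leq 2L_\M^2 \, \rho_{\e_n}(d_\M > K - L_\M) + 2 \ird d_\M^2 \d\rho_{\e_n}(t) \xrightarrow[n\to\infty]{} 0,
\]
and combine with narrow convergence applied to the bounded truncation $|x|^2 \wedge K^2$ to deduce $\ird |x|^2 \d \rho_{\e_n}(t) \to \ird |x|^2 \d \rho(t)$. By \cite[Proposition 7.1.5]{AGS}, this together with narrow convergence yields $d_2(\rho_{\e_n}(t), \rho(t)) \to 0$. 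The initial condition $\rho(0) = \rho^0$ then follows from the triangle inequality and $d_2(\rho_{\e_n}^0, \rho^0) \to 0$.
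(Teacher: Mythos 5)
Your proof is correct and follows the same overall architecture as the paper's: uniform energy bound along the flow, second-moment bound via $|x|\leq d_\M(x)+L_\M$, H\"older equicontinuity in $d_2$, the refined Ascoli--Arzel\`a theorem \cite[Proposition 3.3.1]{AGS}, upgrade to Wasserstein convergence, and $AC^2$ regularity from weak $L^2$-compactness of the metric derivatives together with narrow lower semicontinuity of $d_2$. Three local steps differ. First, you obtain equicontinuity from the Cauchy--Schwarz estimate $d_2(\rho_\e(s),\rho_\e(t))\leq\sqrt{C''}|t-s|^{1/2}$ using the $L^2$ bound on $|\rho_\e'|$ coming directly from the maximal-slope inequality, whereas the paper invokes the evolution variational inequality; your route is slightly more economical since it uses only the definition of a curve of maximal slope and the lower bound on $E_\e$, not semiconvexity. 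Second, you deduce $\rho(t)\in\P(\M)$ from the concentration estimate $\ird d_\M^2\,\d\rho_\e(t)\leq C'\e$ and Portmanteau, whereas the paper uses the $\Gamma$-liminf inequality of Lemma \ref{lem:gamma-conv}; both are fine, and yours is more elementary. Third, and most substantively, your upgrade from narrow to Wasserstein convergence via the explicit tail estimate is more careful than the paper's: a uniform second-moment bound alone does not give convergence of second moments (hence does not suffice for \cite[Proposition 7.1.5]{AGS}), and it is precisely the confinement-induced uniform integrability of $|x|^2$ that you exploit, and that the paper leaves implicit. One small point to watch: when extracting the uniform concentration bound from $E_\e(\rho_\e(t))\leq C$ you should normalize $W$ (Assumption \ref{hyp:W} only gives boundedness from below, so subtract $c_W$ or note that adding a constant to $W$ shifts $E_\e$ by a constant), which you do correctly.
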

We remark that once we show the remainder of Theorem \ref{thm:main-epsilon} we will know that $\rho$ is a solution to an initial-value problem for \eqref{eq:main-epsilon}. Since then, by Proposition \ref{prop:stability}, the solutions are unique, we will conclude that in fact $d_2(\rho_\e(t),\rho(t)) \to 0$ as $\e \to 0$ in Lemma \ref{lem:compactness}, not just along a subsequence.
\begin{proof}[Proof of Lemma \ref{lem:compactness}]
Since $\limsup E_\e(\rho_\e^0) \leq E(\rho^0) < \infty$ as $\e \to0$ there exists a constant $A_0>0$, independent of $\e$ and $t$, such that $E_\e(\rho_\e^0) \leq A_0$ for all $\e>0$ small enough. Choose any such $\e$. By decreasing monotonicity of the energy $E_\e$ along $\rho_\e$ (because $\rho_\e$ is a curve of maximal slope for $E_\e$ with respect to the strong upper gradient $|\p E_\e|$), this implies
\bes
	E_\e(\rho_\e(t)) \leq A_0 \quad \mbox{for all $t\geq0$}.
\ees
By a similar argument to that given in the proof of Lemma \ref{lem:mins}, letting again $\e < 1$, we have
\bes
	A_0 \geq E_\e(\rho_\e(t)) \geq \left (\sqrt{\ird |x|^2 \d\rho_\e(t,x)} - 2L_\M \right)^2 - 4L_\M^2,
\ees
where we recall that $L_\M$ is the diameter of $\M$. This proves there exists $M_0>0$, independent of $\e$ and $t$, such that
\be\label{eq:moment-bound}
	\ird |x|^2 \d\rho_\e(t,x) \leq M_0 \quad \mbox{for all $t\geq0$}.
\ee
Now, by the evolution variational inequality \cite[Theorem 5.3(iii)]{AGS} thanks to the semiconvexity of $W$, we have, for all $0\leq \tau \leq t$,
\begin{align*}
	d_2^2(\rho_\e(t),\rho_\e(\tau)) &\leq 2\int_\tau^t (E_\e(\rho_\e(\tau)) - E_\e(\rho_\e(s))) \d s\\
	&\leq 2\int_\tau^t (E_\e(\rho_\e^0) - E_\e(\rho_\e(t))) \d s\\
	&\leq 2 E_\e(\rho_\e^0) (t-\tau) \leq 2A_0 (t-\tau),
\end{align*}
where we used that $E_\e\geq0$; by swapping $t$ and $\tau$ when $0\leq t < \tau$, we get
\bes
	d_2^2(\rho_\e(t),\rho_\e(\tau)) \leq 2A_0 |t-\tau| \quad \mbox{for all $t,\tau \geq0$}.
\ees
This, together with \eqref{eq:moment-bound}, Prokhorov's theorem and the Arzel\`a--Ascoli theorem as given in \cite[Proposition 3.3.1]{AGS}, implies that there exists $\rho \in C([0,\infty];\P_2(\R^d))$ such that, up to a subsequence, $\rho_\e(t) \wto \rho(t)$ narrowly as $\e\to0$ for all $t\geq0$. By \eqref{eq:moment-bound} and \cite[Proposition 7.1.5]{AGS} it follows that, in fact, $d_2(\rho_\e(t),\rho(t)) \to0$ as $\e\to0$ for all $t\geq0$. To show that $\rho(0) = \rho^0$ we simply use the triangle inequality:
\bes
	d_2(\rho(0),\rho^0) \leq d_2(\rho(0),\rho_\e^0) + d_2(\rho_\e^0,\rho^0) = d_2(\rho(0),\rho_\e(0)) + d_2(\rho_\e^0,\rho^0) \to 0 \quad \mbox{as $\e\to0$.}
\ees
By the narrow $\Gamma$-convergence of $(E_\e)_\e$ we deduce that 
\bes
	\infty > A_0 \geq \liminf_{\e\to0} E_\e(\rho_\e(t)) \geq E(\rho(t)) \quad \mbox{for all $t\geq0$},
\ees
which shows that $\rho(t) \in \dom(E) = \P(\M)$ for all $t\geq0$.

We finally need to show that $\rho$ is actually a curve in $AC^2([0,\infty);\P_2(\R^d))$. This part is based on \cite[Theorem 5.6]{CT}; see also \cite[Lemma 4.3]{CPSW}. Fix any $t \geq0$. We have
\bes
	\int_0^t |\rho_\e'|(s)^2 \d s = E_\e(\rho_\e^0) - E_\e(\rho_\e(t)) \leq A_0.
\ees
Then, up to a subsequence, $\lim_{\e\to0} \int_0^t |\rho_\e'|(s)^2 \d s = C$ for some $t$-independent $C\geq 0$. Therefore $|\rho_\e'|$ is bounded in $L^2([0, t])$ and so, up to a further subsequence, it is $L^2$-weakly convergent to some $v \in L^2([0,t])$. It is then also $L^1$-weakly convergent to $v$, so that
\bes
	\lim_{\e\to0} \int_{\tau}^{t} |\rho_\e'|(s) \d s = \int_{\tau}^{t} v(s) \d s \quad \mbox{for all $0\leq \tau \leq t$}.
\ees
We also know that, by definition of the metric derivative and $\rho_\e$ being $2$-absolutely continuous,
\bes
	d_2(\rho_\e(\tau), \rho_\e(t)) \leq \int_{\tau}^{t} |\rho_\e'|(s) \d s.
\ees
Then, by the narrow lower semicontinuity of $d_2$ (see \cite[Proposition 2.5]{AG}), sending $\e\to0$ yields
\be\label{eq:d2-lsc}
	d_2(\rho(\tau), \rho(t)) \leq \int_{\tau}^{t} v(s) \d s,
\ee
which implies that $\rho \in AC^2 ([0,\infty);\P_2(\R^d))$.
\end{proof}

\subsection{Convergence}

We now present the proof of the convergence part of Theorem \ref{thm:main-epsilon}. Let us recall the result by Sandier--Serfaty on which we base our proof:
\begin{thm}[Sandier--Serfaty]\label{thm:ss}
	For all $\e>0$, let $\rho_\e \in AC^2([0,\infty);\P_2(\R^d))$ be an $\e$-gradient flow such that $\rho_\e(0) = \rho_\e^0$ for some $\rho_\e^0 \in \P_2(\R^d)$. Assume that $d_2(\rho_\e(t),\rho(t)) \to 0$ as $\e\to0$ for all $t \geq0$ for some $\rho\in AC^2([0,\infty);\P_2(\R^d))$ such that $\rho^0:=\rho(0) \in \P(\M)$ and $\limsup E_\e(\rho_\e^0) \leq E(\rho^0)$ as $\e\to0$. Furthermore, suppose that the following conditions hold for all $t\geq0$:
\begin{enumerate}[label=(C\arabic*)]
	\item \label{cond:md} $\displaystyle \liminf_{\e\to 0} \int_0^t |\rho_\e'|_{d_2}(s)^2 \d s \geq \int_0^t|\rho'|_{d_2}(s)^2\d s$.\\
	\item \label{cond:liminf} $\displaystyle \liminf_{\e\to0} E_\e(\rho_\e(t)) \geq \displaystyle E(\rho(t))$.\\
	\item \label{cond:slopes} $\displaystyle \liminf_{\e\to 0} |\p E_\e|(\rho_\e(t)) \geq |\p E|(\rho(t))$.
\end{enumerate}
Then $\rho$ is a gradient flow for $E$ starting from $\rho^0$, and
\be\label{eq:serfaty-conseq}
	\begin{cases}
		|\rho_\e'| \xrightarrow[\e\to0]{} |\rho'| & \mbox{in $L^2([0,\infty))$},\\
		E_\e(\rho_\e(t)) \xrightarrow[\e\to0]{} E_\e(\rho(t)) & \mbox{for all $t\geq0$},\\
		|\p E_\e|(\rho_\e) \xrightarrow[\e\to0]{} |\p E|(\rho) & \mbox{in $L^2([0,\infty))$}.
	\end{cases}
\ee
\end{thm}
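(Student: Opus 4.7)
The plan is to follow the classical Sandier--Serfaty argument built on the energy--dissipation identity. Because $\rho_\e$ is a curve of maximal slope for $E_\e$ and $|\p E_\e|$ is a strong upper gradient, combining the maximal-slope inequality with the upper-gradient inequality (applied via Young's inequality in the form $ab\leq\tfrac12 a^2+\tfrac12 b^2$) yields the exact identity
\bes
    E_\e(\rho_\e^0) = E_\e(\rho_\e(t)) + \frac12 \int_0^t \left( |\p E_\e|(\rho_\e(\tau))^2 + |\rho_\e'|(\tau)^2 \right) \d\tau
\ees
for every $t\geq 0$ and every $\e>0$. As a by-product, since $W$ is bounded from below by Assumption \ref{hyp:W} and therefore $E_\e$ is bounded below uniformly in $\e$, while $E_\e(\rho_\e^0)$ is bounded above uniformly by the assumption $\limsup E_\e(\rho_\e^0) \leq E(\rho^0) < \infty$, one obtains uniform $L^2([0,t])$-bounds on $|\rho_\e'|$ and on $|\p E_\e|(\rho_\e)$.

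The next step is to pass to the limit $\e \to 0$ in the identity above. The hypothesis on the initial data caps the left-hand side by $E(\rho^0)$. On the right-hand side, condition \ref{cond:liminf} controls $E_\e(\rho_\e(t))$ pointwise by $E(\rho(t))$, condition \ref{cond:md} handles the metric-derivative integral, and Fatou's lemma combined with the pointwise bound in \ref{cond:slopes} handles the slope integral (the nonnegativity and Borel measurability of the slope integrand make Fatou applicable). Collecting these gives
\bes
    E(\rho^0) \geq E(\rho(t)) + \frac12 \int_0^t \left( |\p E|(\rho(\tau))^2 + |\rho'|(\tau)^2 \right) \d\tau.
\ees
Because $|\p E|$ is a strong upper gradient of $E$ and $\rho \in AC^2([0,\infty);\P_2(\R^d))$, the reverse inequality holds automatically from the upper-gradient inequality followed by Young's inequality; hence equality is forced. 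This is precisely the maximal-slope equality for $E$ starting from $\rho^0$, and it identifies $\rho$ as a gradient flow for $E$.

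Finally, I would derive the three convergences in \eqref{eq:serfaty-conseq} by tracking the equality case in the chain above: each liminf used in the limit passage must in fact be an actual limit equal to the corresponding quantity for $\rho$. This already yields $E_\e(\rho_\e(t)) \to E(\rho(t))$ and the convergence of $L^2$-norms of both $|\rho_\e'|$ and $|\p E_\e|(\rho_\e)$ to those of $|\rho'|$ and $|\p E|(\rho)$. To upgrade norm convergence to strong $L^2([0,\infty))$-convergence, I would extract weak $L^2$-limits from the uniform bounds, identify them using conditions \ref{cond:md}--\ref{cond:slopes} and the narrow lower semicontinuity of $d_2$ (analogously to the argument leading to \eqref{eq:d2-lsc} in the proof of Lemma \ref{lem:compactness}), and then apply the Radon--Riesz property of $L^2$: weak convergence together with convergence of norms gives strong convergence.

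The step I expect to be the main obstacle is the slope integral: identifying the weak $L^2$-limit of $|\p E_\e|(\rho_\e)$ with $|\p E|(\rho)$ is delicate precisely because the semiconvexity constants of $E_\e$ blow up as $\e \to 0$, so the standard stability of metric slopes for uniformly semiconvex functionals is unavailable. The entire burden here is shifted onto the assumed pointwise inequality \ref{cond:slopes}, which is why that condition (rather than narrow $\Gamma$-convergence of slopes, or Mosco-type convergence) has to be verified by hand in the application to the $\e$-interaction energies later in the paper.
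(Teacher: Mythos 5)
The paper does not actually prove this theorem---it is recalled from Sandier--Serfaty \cite{SS,Serfaty}---and your argument is precisely the standard proof from those references: the energy--dissipation identity for the curve of maximal slope $\rho_\e$ (obtained by closing the maximal-slope inequality against the strong-upper-gradient inequality via Young), limit passage using \ref{cond:md}--\ref{cond:slopes} together with Fatou for the slope term, the reverse inequality from the strong-upper-gradient property of $|\p E|$, and extraction of the convergences in \eqref{eq:serfaty-conseq} from the forced equalities. This is correct; the only points worth making explicit in a full write-up are that the identity should be run on arbitrary intervals $[s,t]$ (or obtained by subtracting the identities at $s$ and $t$) so as to recover both the monotonicity of $t\mapsto E(\rho(t))$ and the maximal-slope inequality in the form required by the definition, and that passing from convergence of $L^2$-norms on $[0,t]$ to strong convergence in $L^2([0,\infty))$ needs exactly the weak-limit identification plus Radon--Riesz step you describe.
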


In our case, Condition \ref{cond:md} follows directly from the above argument. Indeed, writing $\rho$ and $v$ as in the proof of Lemma \ref{lem:compactness}, by \eqref{eq:d2-lsc} and \cite[Theorem 1.1.2]{AGS} we have $|\rho_\e'|(s) \leq v(s)$ for almost every $s\geq0$. Then, by the weak lower semicontinuity of the $L^2$-norm, we get
\bes
	\liminf_{\e\to0} \int_0^t |\rho_\e'|(s)^2 \d s \geq \int_0^t v(s)^2 \d s \geq \int_0^t |\rho'|(s)^2 \d s,
\ees
which is \ref{cond:md}. Condition \ref{cond:liminf} is a direct consequence of the $\Gamma$-convergence of $(E_\e)_\e$ to $E$ given in Lemma \ref{lem:gamma-conv}. Thus, we are only left with proving Condition \ref{cond:slopes}, which we do in the remainder of this section.

\subsubsection*{\textit{\textbf{Proof of Condition \ref{cond:slopes}}}} Let $(\rho_\e)_\e$ and $\rho$ be as in Theorem \ref{thm:main-epsilon}. For clarity in this proof we will often omit time dependence so that, for example, we will write $\rho_\e$ and $\rho$ in place of $\rho_\e(t)$ and $\rho(t)$, respectively. 

Let us introduce some notation we shall use throughout the proof. For any $\mu \in \P(\R^d)$ and any Borel set $A\subset \R^d$ we write $\mu\restrict{A}$ the restriction of $\mu$ to $A$. We define, for all $i \in \{1,2\}$ and $j,k \in \{1,2,3\}$, the projections
\bes
\begin{gathered}
	\pi_i\: (\R^d)^2 \to \R^d,\, (x_1,x_2) \mapsto x_i,\\
	\tilde \pi_j\: (\R^d)^3 \to \R^d,\, (x_1,x_2,x_3) \mapsto x_j,\\
	\tilde \pi_{jk} \: (\R^d)^3 \to (\R^d)^2,\, (x_1,x_2,x_3) \mapsto (x_j,x_k).\\
\end{gathered}
\ees
We use the two identity maps $\id \: \R^d \to \R^d,\, x \mapsto x$, and $\id_2 \: \R^d \to (\R^d)^2,\, x\mapsto (x,x)$. We fix $r\in(0,\eta_\M)$ and write $\M_r$ the $r$-neighborhood of $\M$, i.e.,
\bes
	\M_r = \{ x\in \R^d \st d_\M(x) < r\}.
\ees 
We write 
\bes	
	\Pi_\M\: \M_r \to \M
\ees
the projection on $\M$ restricted to $\M_r$, which is well-defined for $r$ is smaller than the reach $\eta_\M$ of $\M$. Without losing generality we may suppose $r$ is such that $p_\e:=\rho_\e(\M_r)>0$ for all $\e>0$; note that $p_\e \to 1$ as $\e\to0$. 

For every $\e>0$ let us write $\hat \rho_\e \in \P_2(\R^d)$ the projection of $\rho_\e$ defined as
\bes
	\hat \rho_\e = (\Pi_\M)_\# \sigma_\e,
\ees
where $\sigma_\e\in \P_2(\R^d)$ stands for $(\rho_\e\restrict{\M_r})/p_\e$. We clearly have $\supp(\hat \rho_\e) \subset \M$ and so $\hat \rho_\e \in \dom(E)$. Then, by definition, for all $\e>0$ we have
\bes
	|\p E|(\hat \rho_\e) = \limsup_{\substack{\nu_\e \to \hat \rho_\e \\ \supp(\nu_\e) \subset \M}} \frac{\left(E(\hat \rho_\e) - E(\nu_\e)\right)_+}{d_2(\hat \rho_\e,\nu_\e)}.
\ees
Because $|\p E|$ is $d_2$-lower semicontinuous \cite[Corollary 2.4.10]{AGS} and since $d_2(\hat \rho_\e,\rho) \to0$ as $\e\to0$, we get
\bes
	\liminf_{\e \to 0} |\p E|(\hat\rho_\e) \geq |\p E|(\rho).
\ees
Therefore, Condition \ref{cond:slopes} is a direct consequence of
\be\label{eq:ls-result}
	\liminf_{\e\to0} |\p E_\e|(\rho_\e) \geq \liminf_{\e \to0} |\p E|(\hat \rho_\e).
\ee
Proving \eqref{eq:ls-result} will therefore conclude the proof of Condition \ref{cond:slopes}. The proof of the following lemma includes all the main ideas to achieve this; it shows \eqref{eq:ls-result} in the simpler case where $\supp(\rho_\e) \subset \M_r$ for all $\e>0$, which we later relax.

\begin{lem}\label{lem:main}
	Let $(\rho_\e)_\e$ be as in Theorem \ref{thm:main-epsilon}, and suppose moreover that $\supp(\rho_\e) \subset \M_r$ for all $\e>0$. Then \eqref{eq:ls-result} holds.
\end{lem}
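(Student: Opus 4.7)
My plan is, for each $\e > 0$, to lift an approximately optimal test perturbation of $\hat\rho_\e$ in $\P(\M)$ realizing the slope $|\p E|(\hat\rho_\e)$ to a perturbation of $\rho_\e$ in $\P_2(\R^d)$ via a three-plane coupling, then compare the two slope quotients up to an error of order $\delta_\e := d_2(\rho_\e, \hat\rho_\e)$. Under the assumption $\supp(\rho_\e) \subset \M_r$ one has $p_\e = 1$, $\sigma_\e = \rho_\e$ and $\hat\rho_\e = (\Pi_\M)_\# \rho_\e$. The uniform energy bound $E_\e(\rho_\e) \leq A_0$ from the compactness proof, together with $W$ bounded below, yields $\ird d_\M^2 \d \rho_\e \leq C\e$, so $\delta_\e \leq \sqrt{C\e} \to 0$ as $\e \to 0$; this will be the size of the slope comparison error.

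For the lift, set $\gamma_\e = (\id, \Pi_\M)_\# \rho_\e \in \Pi(\rho_\e, \hat\rho_\e)$. Given $\nu \in \P(\M)$ with $d_2(\hat\rho_\e, \nu) > 0$, take a $d_2$-optimal $\beta \in \Pi(\hat\rho_\e, \nu)$ and apply the gluing lemma to produce $\alpha \in \P((\R^d)^3)$ with $\tilde\pi_{12\#}\alpha = \gamma_\e$ and $\tilde\pi_{23\#}\alpha = \beta$. Define $T\colon (\R^d)^3 \to \R^d$ by $T(z_1, z_2, z_3) = z_1 + z_3 - z_2$ and set $\mu = T_\# \alpha$. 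Since $(\tilde\pi_1, T)_\# \alpha \in \Pi(\rho_\e, \mu)$, one immediately gets $d_2^2(\rho_\e, \mu) \leq \int |z_2 - z_3|^2 \d \alpha = d_2^2(\hat\rho_\e, \nu)$.

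Next I would control $E_\e(\rho_\e) - E_\e(\mu)$. For the confinement part, $\alpha$-a.s. we have $z_2 = \Pi_\M(z_1)$ and $z_3 \in \M$, so $d_\M(T(z)) \leq |T(z) - z_3| = d_\M(z_1)$; hence $\ird d_\M^2 \d \mu \leq \ird d_\M^2 \d \rho_\e$ and this contribution is nonnegative. For the interaction part the quantity to bound is
\bes
    V(z, z') := W(z_1 - z_1') - W(T(z) - T(z')) - W(z_2 - z_2') + W(z_3 - z_3').
\ees
Using $T(z) - T(z') = (z_1 - z_1') + (z_3 - z_3') - (z_2 - z_2')$ and a first-order Taylor expansion of $\grad W$ (valid since $W \in C^2$ and all relevant supports lie in a fixed bounded set), one obtains the bilinear bound $|V(z, z')| \leq L \, |(z_1 - z_2) - (z_1' - z_2')| \cdot |(z_3 - z_2) - (z_3' - z_2')|$. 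Since $|z_1 - z_2| = d_\M(z_1)$, the first factor is $L^2(\alpha \otimes \alpha)$-controlled by $\delta_\e$ and the second by $d_2(\hat\rho_\e, \nu)$; Cauchy--Schwarz therefore yields $|\iint V \d \alpha \d \alpha| \leq C \delta_\e \, d_2(\hat\rho_\e, \nu)$ and so
\bes
    E_\e(\rho_\e) - E_\e(\mu) \geq E(\hat\rho_\e) - E(\nu) - C \delta_\e \, d_2(\hat\rho_\e, \nu).
\ees

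Combining distance and energy estimates gives
\bes
    \frac{(E_\e(\rho_\e) - E_\e(\mu))_+}{d_2(\rho_\e, \mu)} \geq \frac{(E(\hat\rho_\e) - E(\nu))_+}{d_2(\hat\rho_\e, \nu)} - C \delta_\e.
\ees
For each $\e$, picking a sequence $(\nu_{\e,n})_n \subset \P(\M)$ with $d_2(\hat\rho_\e, \nu_{\e,n}) \to 0$ along which the right-hand quotient converges to $|\p E|(\hat\rho_\e)$ and feeding it through the lift produces a sequence $\mu_{\e,n}$ with $d_2(\rho_\e, \mu_{\e,n}) \to 0$ witnessing $|\p E_\e|(\rho_\e) \geq |\p E|(\hat\rho_\e) - C \delta_\e$. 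Taking $\liminf$ as $\e \to 0$ then yields \eqref{eq:ls-result}.

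The main technical obstacle is the second-order cancellation in the interaction part: a naive first-order Lipschitz bound would give only $O(\delta_\e)$, independent of $d_2(\hat\rho_\e, \nu)$, and this would not survive the passage $d_2(\hat\rho_\e, \nu) \to 0$ that is required to access the slope of $E$ at $\hat\rho_\e$. The four-term combination $V$, built precisely from the displacement structure of $T$, is designed so that its Taylor expansion produces the bilinear factor $\delta_\e \cdot d_2(\hat\rho_\e, \nu)$ — exactly the scale that is compatible with the slope quotient and makes the argument go through.
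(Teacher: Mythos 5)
Your proposal is correct and follows essentially the same route as the paper: the same three-marginal gluing with the map $(z_1,z_2,z_3)\mapsto z_1+z_3-z_2$, the same distance bound $d_2(\rho_\e,\mu)\leq d_2(\hat\rho_\e,\nu)$, the same monotonicity of the confinement term under the shift, and the same second-order Taylor cancellation closed by Cauchy--Schwarz to produce the $\delta_\e\, d_2(\hat\rho_\e,\nu)$ error. Your packaging of the interaction comparison as a single second difference $V$ is a slightly cleaner bookkeeping of the paper's two separate expansions (it even drops the paper's harmless extra $O(d_2^2(\hat\rho_\e,\nu))$ term), but it is not a different argument.
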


\begin{proof}
Note that, given $\e >0$, there exists a sequence $(\nu_\e^n)_n$ such that $\nu_\e^n \in \P(\M)$ for all $n\in \N$, $d_2(\hat\rho_\e,\nu_\e^n) \to 0$ as $n\to \infty$ and 
\be\label{eq:slope-hat}
   |\p E|(\hat \rho_\e) = \lim_{n\to\infty} \frac{\left(E(\hat \rho_\e) - E(\nu_\e^n)\right)_+}{d_2(\hat \rho_\e,\nu_\e^n)}.
\ee
We want to construct $(\mu_\e^n)_n \subset \P_2(\R^d)$ such that $d_2(\rho_\e,\mu_\e^n) \to0$ as $n\to\infty$ and
\be\label{eq:ls-mu-nu}
	\liminf_{\e\to0} \lim_{n\to\infty} \frac{(E_\e(\rho_\e) - E_\e(\mu_\e^n))_+}{d_2(\rho_\e,\mu_\e^n)} \geq \liminf_{\e\to0} \lim_{n\to\infty} \frac{\left(E(\hat \rho_\e) - E(\nu_\e^n)\right)_+}{d_2(\hat \rho_\e,\nu_\e^n)}.
\ee
Indeed, by the definition of $|\p E_\e|(\rho_\e)$, this directly implies the desired result \eqref{eq:ls-result}.

Note that, because $\supp(\rho_\e) \subset \M_r$, we have $p_\e = 1$, $\sigma_\e=\rho_\e$ and $\hat\rho_\e = (\Pi_\M)_\#\rho_\e$.

\medskip
\textit{Step 1: constructing $(\mu_\e^n)_n$.} Fix $\e>0$ and $n\in \N$ in this step. Let us write $\theta_\e^n \in \P(\R^d\times\R^d\times\R^d)$ a plan with $\supp(\theta_\e^n) \subset \M_r \times \M \times \M$ such that 
\be \label{eq:projs-theta}
	(\tilde \pi_{12})_\# \theta_\e^n = (\id,\Pi_\M)_\# \rho_\e \quad \mbox{and} \quad (\tilde \pi_{23})_\# \theta_\e^n = \omega_\e^n,
\ee
where $\omega_\e^n$ is the optimal transport plan between $\hat \rho_\e$ and $\nu_\e^n$, where we recall $\nu_\e^n$ is defined in \eqref{eq:slope-hat}. Here the notation $(\id,\Pi_\M)$ stands for the map defined by $(\id,\Pi_\M)(x) = (x,\Pi_\M(x))$ for all $x\in\M_r$. The existence of the ``compound'' plan $\theta_\e^n$ is justified by \cite[Lemma 5.3.2]{AGS}. In particular, $(\tilde \pi_{12})_\# \theta_\e^n$ is the optimal transport plan between $\rho_\e$ and $\hat \rho_\e$. For all Borel sets $X,Y \subset \R^d$, we then define $\gamma_\e^n \in \P(\R^d\times \R^d)$ by
\be\label{eq:gamma}
	\gamma_\e^n(X \times Y) = \begin{cases} (\tilde \pi_1, \zeta)_\# \theta_\e^n & \mbox{if $X,Y \subset \M_r$}\\ 0 & \mbox{otherwise}, \end{cases}
\ee
where $\zeta \: \R^d \times \M \times \M \to \R^d$ is given by $\zeta(x,p,q) = x + q - p$ for all $(x,p,q) \in \R^d \times \M \times \M$. We now choose
\bes
	\mu_\e^n = (\pi_2)_\# \gamma_\e^n.
\ees
We have $(\pi_1)_\# \gamma_\e^n = \rho_\e$, so that $\gamma_\e^n$ is a transport plan between $\rho_\e$ and $\mu_\e^n$. Figure \ref{fig:projection} illustrates this construction.

\bfig[!ht]
\centering
	\includegraphics[scale=1.25]{./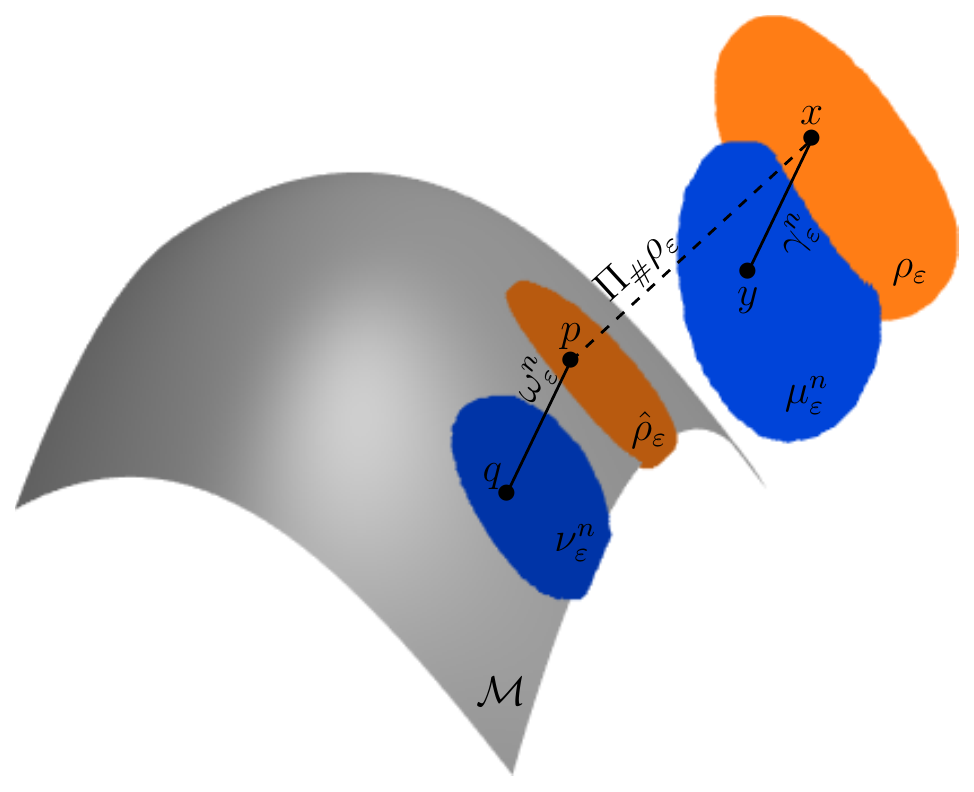}
	\caption{Construction of $\mu_\e^n$\label{fig:projection}.} 
\efig

\medskip
\textit{Step 2: checking $d_2(\rho_\e,\mu_\e^n) \to 0$ as $n\to \infty$.} Let $\e>0$. For all $n\in\N$, because $\gamma_\e^n$ is a transport plan between $\rho_\e$ and $\mu_\e^n$, \eqref{eq:projs-theta} and \eqref{eq:gamma} imply
\begin{align*}
	d_2^2(\rho_\e,\mu_\e^n) &\leq \irdrd |x-y|^2 \d \gamma_\e^n(x,y) = \int_{\M_r\times\M_r} |x-y|^2 \d (\tilde \pi_1,\zeta)_\# \theta_\e^n(x,y)\\
	&=  \int_{\M_r\times \M \times \M} |\tilde \pi_1(x,p,q)-\zeta(x,p,q)|^2 \d \theta_\e^n(x,p,q)\\
	&= \int_{\M_r\times \M \times \M} |x - (x + q - p)|^2 \d \theta_\e^n(x,p,q)\\
	&= \int_{\M_r\times \M \times \M} |p-q|^2 \d \theta_\e^n(x,p,q) = \int_{\M \times \M} |p-q|^2 \d (\tilde \pi_{23})_\# \theta_\e^n(p,q)\\
	&= \int_{\M \times \M} |p-q|^2 \d \omega_\e^n(p,q) = d_2^2(\hat \rho_\e,\nu_\e^n).
\end{align*}
We therefore have, for all $n\in\N$,
\be\label{eq:d2-ineq}
	d_2(\rho_\e,\mu_\e^n) \leq d_2(\hat \rho_\e,\nu_\e^n).
\ee
Since $d_2(\hat \rho_\e,\nu_\e^n) \to 0$ as $n \to \infty$, we have that $d_2(\rho_\e,\mu_\e^n) \to 0$ as well. 

\medskip
\textit{Step 3: getting \eqref{eq:ls-mu-nu}.} Let $n\in\N$. Note that, setting $\Sigma = \M_r\times \M\times\M$ and $d = d_\M$,
\begin{align*}
	\irdrd \left( d(x)^2 - d(y)^2 \right) \d \gamma_\e^n(x,y) &= \int_{\M_r\times\M_r} \left( d(x)^2 - d(y)^2 \right) \d (\tilde \pi_1,\zeta)_\# \theta_\e^n(x,y)\\
	&= \int_{\Sigma} \left( d(x)^2 - d(x+q-p)^2 \right) \d \theta_\e^n(x,p,q)\\
	&= \int_{\Sigma} \left( d(x)^2 - |x+q-p - \Pi_\M(x+q-p)|^2 \right) \d \theta_\e^n(x,p,q)\\
	&\geq \int_{\Sigma} \left( d(x)^2 - |x+q-p - q|^2 \right) \d \theta_\e^n(x,p,q)\\
	&= \int_{\Sigma} \left( d(x)^2 - |x-p|^2 \right) \d \theta_\e^n(x,p,q)\\
	&= \int_{\M_r} d(x)^2 \d\rho_\e(x) - \int_{\M_r\times \M} |x-p|^2 \d (\id,\Pi_\M)_\# \rho_\e(x,p)\\
	&= \int_{\M_r} d(x)^2 \d\rho_\e(x) - \int_{\M_r} |x-\Pi_\M(x)|^2 \d \rho_\e(x)\\
	&= \int_{\M_r} d(x)^2 \d\rho_\e(x) - \int_{\M_r} d(x)^2 \d \rho_\e(x) = 0,
\end{align*}
which leads to
\bes
	E_\e(\rho_\e) - E_\e(\mu_\e^n)\geq \frac{1}{2} \int_{\M_r\times\M_r} \left( W(x-u) - W(y-v) \right) \d \gamma_\e^n(u,v) \d\gamma_\e^n(x,y).
\ees
Using \eqref{eq:gamma} we get
\bes
	E_\e(\rho_\e) - E_\e(\mu_\e^n) \geq \frac{1}{2} \int_{\Sigma} \int_{\Sigma} \left( W(x-u) - W(x+q-p - u-t+s) \right) \d \theta_\e^n(u,s,t) \d\theta_\e^n(x,p,q).
\ees
Because $W$ is of class $C^2$, the integrand above verifies
\bes
	W(x-u) - W(x+q-p - u-t+s) \geq \grad W(x-u) \cdot (p-q - (s-t)) - A (|p-q|^2 + |s-t|^2),
\ees
where $A>0$ is a constant depending on the gradient of $W$. We also have that
\bes
	\grad W(x-u)\cdot (p-q - (s-t)) \geq \grad W(p-s)\cdot (p-q - (s-t)) -B (|x-p| + |u-s|) (|p-q| + |s-t|),
\ees
where $B>0$ is a constant depending on the Hessian of $W$. Thus,
\begin{align*}
	E_\e(\rho_\e) - E_\e(\mu_\e^n) &\geq \frac{1}{2} \int_{\Sigma} \int_{\Sigma} \grad W(p-s) \cdot (p-q - (s-t)) \d \theta_\e^n(u,s,t) \d\theta_\e^n(x,p,q)\\
	&\phantom{{}\geq{}}- \frac{B}{2} \int_{\Sigma} \int_{\Sigma} (|x-p| + |u-s|) (|p-q| + |s-t| ) \d \theta_\e^n(u,s,t) \d\theta_\e^n(x,p,q)\\
	&\phantom{{}\geq{}}- \frac{A}{2} d_2^2(\hat\rho_\e,\nu_\e^n).
\end{align*}
Now, since both $\supp(\hat \rho_\e)$ and $\supp(\nu_\e^n)$ are contained in $\M$,
\begin{align*}
	E(\hat \rho_\e) - E(\nu_\e^n) &= \frac12 \int_{\M \times \M} \int_{\M \times \M} \left( W(p-s) - W(q-t) \right) \d \omega_\e^n(s,t) \d\omega_\e^n(p,q)\\
	&= \frac12 \int_{\Sigma} \int_{\Sigma} \left( W(p-s) - W(q-t) \right) \d \theta_\e^n(u,s,t) \d\theta_\e^n(x,p,q)
\end{align*}
The integrand above satisfies
\bes
	W(p-s) - W(q-t) \leq \grad W(p-s)\cdot (p-s - (q-t)) + C(|p-q|^2 + |s-t|^2),
\ees
where $C>0$ is a constant depending on the gradient of $W$. Thus,
\bes
	E(\hat \rho_\e) - E(\nu_\e^n) \leq \frac12 \int_{\Sigma} \int_{\Sigma} \grad W(p-s)\cdot (p-q - (s-t)) \d \theta_\e^n(u,s,t) \d\theta_\e^n(x,p,q)+ \frac{C}{2} d_2^2(\hat\rho_\e,\nu_\e^n).
\ees
Therefore,
\begin{align*}
	E_\e(\rho_\e) - E_\e(\mu_\e^n) &\geq E(\hat \rho_\e) - E(\nu_\e^n) - D d_2^2(\hat\rho_\e,\nu_\e^n)\\
	&\phantom{{}\geq{}}- \frac{B}{2} \int_{\Sigma} \int_{\Sigma} (|x-p| + |u-s|) (|p-q| + |s-t| ) \d \theta_\e^n(u,s,t) \d\theta_\e^n(x,p,q),
\end{align*}
where $D = \frac12(A + C)$. Furthermore, by the Cauchy--Schwarz inequality and \eqref{eq:projs-theta},
\begin{align*}
	&\int_{\Sigma} \int_{\Sigma} (|x-p| + |u-s|)\cdot (|p-q| + |s-t| ) \d \theta_\e^n(u,s,t) \d\theta_\e^n(x,p,q)\\
	&= \int_{\Sigma} |x-p||p-q| \d\theta_\e^n(x,p,q) + \int_{\Sigma} |u-s||s-t| \d\theta_\e^n(u,s,t)\\
	&\phantom{{}={}}+ \int_{\Sigma} \int_{\Sigma} |x-p||s-t|\d \theta_\e^n(u,s,t) \d\theta_\e^n(x,p,q)\\
	&\phantom{{}={}}+ \int_{\Sigma} \int_{\Sigma} |u-s||p-q|\d \theta_\e^n(u,s,t) \d\theta_\e^n(x,p,q)\\
	&= 2\int_{\Sigma} |x-p||p-q| \d\theta_\e^n(x,p,q)\\
	&\phantom{{}={}}+ 2\int_{\Sigma} \int_{\Sigma} |x-p||s-t|\d \theta_\e^n(u,s,t) \d\theta_\e^n(x,p,q)\\
	&\leq 2 \sqrt{\int_{\Sigma} |x-p|^2 \d\theta_\e^n(x,p,q)} \sqrt{\int_{\Sigma} |p-q|^2 \d\theta_\e^n(x,p,q)} \\
	&\phantom{{}={}}+ 2\sqrt{ \int_{\Sigma} |x-p|^2 \d\theta_\e^n(x,p,q)} \sqrt{ \int_{\Sigma} |s-t|^2 \d \theta_\e^n(u,s,t) } \\
	&= 4 \sqrt{\int_{\M_r \times \M} |x-p|^2 \d (\id,\Pi_\M)_\# \rho_\e(x,p)} \sqrt{\int_{\M \times \M} |p-q|^2 \d\omega_\e^n(p,q)}\\
	&= 4 d_2(\hat\rho_\e,\rho_\e) d_2(\hat\rho_\e,\nu_\e^n).
\end{align*}
Hence
\be\label{eq:E-ineq}
	E_\e(\rho_\e) - E_\e(\mu_\e^n) \geq E(\hat \rho_\e) - E(\nu_\e^n) - 2B d_2(\hat\rho_\e,\rho_\e) d_2(\hat\rho_\e,\nu_\e^n)- D p_\e d_2^2(\hat\rho_\e,\nu_\e^n).
\ee

Finally, using \eqref{eq:d2-ineq} and \eqref{eq:E-ineq} we yield
\begin{align*}
	\lim_{n\to\infty} \frac{(E_\e(\rho_\e) - E_\e(\mu_\e^n))_+}{d_2(\hat\rho_\e,\mu_\e^n)} &\geq \lim_{n\to\infty} \frac{\left( E(\hat \rho_\e) - E(\nu_\e^n) - 2B d_2(\hat\rho_\e,\rho_\e) d_2(\hat\rho_\e,\nu_\e^n)- D d_2^2(\hat\rho_\e,\nu_\e^n) \right)_+}{d_2(\hat \rho_\e,\nu_\e^n)}\\
	&\geq \lim_{n\to\infty} \left( \frac{\left( E(\hat \rho_\e) - E(\nu_\e^n) \right)_+}{d_2(\hat \rho_\e,\nu_\e^n)} - D d_2(\hat\rho_\e,\nu_\e^n) \right) - 2 B d_2(\hat\rho_\e,\rho_\e)\\
	&= \lim_{n\to\infty} \frac{\left( E(\hat \rho_\e) - E(\nu_\e^n) \right)_+}{d_2(\hat \rho_\e,\nu_\e^n)} - 2 B d_2(\hat\rho_\e,\rho_\e).
\end{align*}
Thus, since $d_2(\hat \rho_\e,\rho_\e) \to 0$ as $\e\to0$, we obtain \eqref{eq:ls-mu-nu}.
\end{proof}

We now relax the support assumption of Lemma \ref{lem:main}.

\begin{lem}
	Let $(\rho_\e)_\e$ be as in Theorem \ref{thm:main-epsilon}. Then \eqref{eq:ls-result} holds.
\end{lem}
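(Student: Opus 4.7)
The plan is to modify the construction of Lemma \ref{lem:main} so that the portion of $\rho_\e$ lying outside the tubular neighborhood $\M_r$ is simply left in place, and then to show that this exterior mass contributes only vanishing error terms thanks to its smallness enforced by the energy bound.

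First I would extract quantitative smallness of the exterior mass. Dissipation of $E_\e$ along the $\e$-gradient flow combined with $\limsup_{\e \to 0} E_\e(\rho_\e^0) \leq E(\rho^0) < \infty$ yields $E_\e(\rho_\e(t)) \leq A_0$ uniformly in $t$, and together with the lower bound on $W$ this produces $\tfrac1\e \int_{\R^d} d_\M^2\, d\rho_\e \leq C$. Chebyshev then gives $1 - p_\e = \rho_\e(\M_r^c) \leq C\e/r^2$, and combined with the uniform second-moment bound from Lemma \ref{lem:compactness} and Cauchy--Schwarz one gets $\int(1 + |y|)\, d\beta_\e(y) = O(\sqrt{\e})$, where $\beta_\e := \rho_\e\restrict{\M_r^c}$.

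Next I would adapt the competitor. With $\theta_\e^n$ a compound plan satisfying $(\tilde\pi_{12})_\#\theta_\e^n = (\id, \Pi_\M)_\#\sigma_\e$ and $(\tilde\pi_{23})_\#\theta_\e^n = \omega_\e^n$, I set
\[
    \gamma_\e^n = p_\e(\tilde\pi_1, \zeta)_\#\theta_\e^n + (\id_2)_\#\beta_\e, \qquad \mu_\e^n = (\pi_2)_\#\gamma_\e^n,
\]
so that $\mu_\e^n$ coincides with $\rho_\e$ on $\M_r^c$ and replaces $\alpha_\e := \rho_\e\restrict{\M_r}$ with its translated version $\alpha_\e^n$ of mass $p_\e$. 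The transport-cost computation of Lemma \ref{lem:main} now yields $d_2^2(\rho_\e, \mu_\e^n) \leq p_\e\, d_2^2(\hat\rho_\e, \nu_\e^n)$. Splitting $E_\e(\rho_\e) - E_\e(\mu_\e^n)$, the $\beta_\e$ self-interaction cancels, the $\tfrac1\e$-distance bracket is still nonnegative by the same pointwise estimate of Lemma \ref{lem:main} (applied to $\sigma_\e$), and the $\alpha_\e$ self-interaction is $p_\e^2$ times the quantity analyzed there, contributing $p_\e^2[E(\hat\rho_\e) - E(\nu_\e^n)]$ minus the Taylor errors (vanishing as $n \to \infty$). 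The only new contribution is the cross integral $\int W\, d(\alpha_\e - \alpha_\e^n)\, d\beta_\e$; using the plan $p_\e(\tilde\pi_1, \zeta)_\#\theta_\e^n$, the mean value theorem, the linear growth of $\nabla W$ (inherited from the boundedness of the Hessian implicit in the constants of Lemma \ref{lem:main}), and the bound $\int(1 + |y|)\, d\beta_\e = O(\sqrt{\e})$, one controls this term by $C\sqrt{\e}\, d_2(\hat\rho_\e, \nu_\e^n)$.

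Assembling, using $d_2(\rho_\e, \mu_\e^n) \leq d_2(\hat\rho_\e, \nu_\e^n)$ and $(a - b)_+ \geq a_+ - b$ for $b \geq 0$ to divide by $d_2(\rho_\e, \mu_\e^n)$, one obtains
\[
    \frac{(E_\e(\rho_\e) - E_\e(\mu_\e^n))_+}{d_2(\rho_\e, \mu_\e^n)} \geq p_\e^2\, \frac{(E(\hat\rho_\e) - E(\nu_\e^n))_+}{d_2(\hat\rho_\e, \nu_\e^n)} - 2B p_\e^2\, d_2(\hat\rho_\e, \sigma_\e) - D p_\e^2\, d_2(\hat\rho_\e, \nu_\e^n) - C\sqrt{\e};
\]
letting $n \to \infty$ and then taking $\liminf_{\e \to 0}$ (so that $p_\e \to 1$, $d_2(\hat\rho_\e, \sigma_\e)^2 \leq \int d_\M^2\, d\sigma_\e \leq C\e/p_\e \to 0$, and $\sqrt{\e} \to 0$) produces \eqref{eq:ls-result}. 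The principal technical obstacle is the cross term, since $W$ is not globally Lipschitz and $\beta_\e$ has unbounded support: a naive estimate would blow up, and the argument must combine the quadratic-in-$\e$ smallness of $|\beta_\e|$ with the second-moment bound via Cauchy--Schwarz in order to absorb the linear growth of $\nabla W$ at infinity.
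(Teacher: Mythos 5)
Your construction is the same as the paper's: leave the exterior mass $\rho_\e\restrict{\R^d\setminus\M_r}$ fixed via $(\id_2)_\#$, scale the translated compound plan on $\M_r$ by $p_\e$, and rerun the estimates of Lemma \ref{lem:main} with $\sigma_\e$ in place of $\rho_\e$; your bounds $d_2^2(\rho_\e,\mu_\e^n)\leq p_\e\, d_2^2(\hat\rho_\e,\nu_\e^n)$, the nonnegativity of the confinement bracket, and the $p_\e^2$-scaled interior self-interaction all match the paper exactly. The one place where you go beyond the written argument is the cross-interaction term between the moved interior mass and the stationary exterior mass: the paper passes from the full double integral of $W(x-u)-W(y-v)$ against $\gamma_\e^n\otimes\gamma_\e^n$ to $p_\e^2\,\beta_\e^n\otimes\beta_\e^n$ by restricting to $\M_r\times\M_r$, which silently discards exactly this contribution (it vanishes only under the support hypothesis of Lemma \ref{lem:main}, not in general). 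Your treatment of it --- Chebyshev giving $1-p_\e=O(\e)$, the uniform second-moment bound, Cauchy--Schwarz to get $\int(1+|y|)\,\d\beta_\e=O(\sqrt\e)$, and the linear growth of $\grad W$ to bound the term by $C\sqrt\e\, d_2(\hat\rho_\e,\nu_\e^n)$ --- is precisely what is needed to justify that step, and the resulting $O(\sqrt\e)$ error is harmless after dividing by $d_2(\rho_\e,\mu_\e^n)$ and sending $\e\to0$. Your proof is therefore correct and, on this point, more complete than the paper's.
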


\begin{proof}
We follow the same strategy as in the proof of Lemma \ref{lem:main}, the only difference being that we have to account for the ``extra'' mass $\rho_\e(\R^d \setminus\M_r)$. It is still enough to prove \eqref{eq:ls-mu-nu}, although now $p_\e \in (0,1]$ and $\hat \rho_\e = (\Pi_\M)_\# \sigma_\e$, where we recall that $\sigma_\e = (\rho_\e\restrict{\M_r})/p_\e$. 

\medskip
\textit{Step 1: constructing $(\mu_\e^n)_n$.} Fix $\e>0$ and $n\in \N$ in this step. Let us write $\theta_\e^n \in \P(\R^d\times\R^d\times\R^d)$ a plan with $\supp(\theta_\e^n) \subset \M_r \times \M \times \M$ such that 
\be \label{eq:projs-theta-1}
	(\tilde \pi_{12})_\# \theta_\e^n = (\id,\Pi_\M)_\# \sigma_\e \quad \mbox{and} \quad (\tilde \pi_{23})_\# \theta_\e^n = \omega_\e^n,
\ee
where $\omega_\e^n$ is the optimal transport plan between $\hat \rho_\e$ and $\nu_\e^n$. The existence of the ``compound'' plan $\theta_\e^n$ is justified by \cite[Lemma 5.3.2]{AGS}. In particular, $(\tilde \pi_{12})_\# \theta_\e^n$ is the optimal transport plan between $\sigma_\e$ and $\hat \rho_\e$. For all Borel sets $X,Y \subset \R^d$, we then define $\beta_\e^n \in \P(\R^d\times \R^d)$ by
\be\label{eq:beta}
	\beta_\e^n(X \times Y) = \begin{cases} (\tilde \pi_1, \zeta)_\# \theta_\e^n & \mbox{if $X,Y \subset \M_r$}\\ 0 & \mbox{otherwise}, \end{cases}
\ee
where $\zeta \: \R^d \times \M \times \M \to \R^d$ is given by $\zeta(x,p,q) = x + q - p$ for all $(x,p,q) \in \R^d \times \M \times \M$.
We now define $\gamma_\e^n \in \P(\R^d\times\R^d)$ by
\be \label{eq:gamma-1}
	\gamma_\e^n = p_\e \beta_\e^n + (\id_2)_\# (\rho_\e\restrict{\R^d\setminus \M_r}),
\ee
and choose
\bes
	\mu_\e^n = (\pi_2)_\# \gamma_\e^n.
\ees 
We have $(\pi_1)_\# \gamma_\e^n = p_\e \sigma_\e + \rho_\e\restrict{\R^d \setminus\M_r} = \rho_\e$, so that $\gamma_\e^n$ is a transport plan between $\rho_\e$ and $\mu_\e^n$. 

Note that the definition of $\gamma_\e^n$ in \eqref{eq:gamma-1} is where the main difference with the proof of Lemma \ref{lem:main} lies. Indeed, to account for the mass outside of $\M_r$ we have added the term $(\id_2)_\# (\rho_\e\restrict{\R^d\setminus \M_r})$, which describes the fact that we do not project, or even move, the mass in $\R^d\setminus\M_r$ since it is negligible as $\e \to0$. 

\medskip
\textit{Step 2: checking $d_2(\rho_\e,\mu_\e^n) \to 0$ as $n\to \infty$.} Let $\e>0$. Very similarly as in Step 2 of the proof of Lemma \ref{lem:main}, we get
\begin{align*}
	d_2^2(\rho_\e,\mu_\e^n) &\leq \irdrd |x-y|^2 \d \gamma_\e^n(x,y)\\
	&= p_\e \irdrd |x-y|^2 \d \beta_\e^n(x,y) + \irdrd |x-y|^2 \d (\id_2)_\# (\rho_\e\restrict{\R^d\setminus \M_r})(x,y)\\
	&= p_\e \int_{\M \times \M} |p-q|^2 \d (\tilde \pi_{23})_\# \theta_\e^n(p,q) = p_\e \int_{\M \times \M} |p-q|^2 \d \omega_\e^n(p,q) = p_\e d_2^2(\hat \rho_\e,\nu_\e^n).
\end{align*}
We therefore have, for all $n\in\N$,
\be\label{eq:d2-ineq-1}
	d_2(\rho_\e,\mu_\e^n) \leq \sqrt{p_\e} d_2(\hat \rho_\e,\nu_\e^n).
\ee
Since $d_2(\hat \rho_\e,\nu_\e^n) \to 0$ as $n \to \infty$, we have that $d_2(\rho_\e,\mu_\e^n) \to 0$ as well. 

\medskip
\textit{Step 3: getting \eqref{eq:ls-mu-nu}.} Let $n\in\N$. Again, setting $\Sigma = \M_r\times \M\times\M$ and $d=d_\M$, and following the proof of Lemma \ref{lem:main}, yields
\begin{align*}
	\frac{1}{p_\e}\irdrd \left( d(x)^2 - d(y)^2 \right) \d \gamma_\e^n(x,y) &= \int_{\M_r\times\M_r} \left( d(x)^2 - d(y)^2 \right) \d \beta_\e^n(x,y)\\
	&= \int_{\M_r} d(x)^2 \d\sigma_\e(x) - \int_{\M_r\times \M} |x-p|^2 \d (\id,\Pi_\M)_\# \sigma_\e(x,p)\\
	&= \int_{\M_r} d(x)^2 \d\sigma_\e(x) - \int_{\M_r} |x-\Pi_\M(x)|^2 \d \sigma_\e(x)\\
	&= \int_{\M_r} d(x)^2 \d\sigma_\e(x) - \int_{\M_r} d(x)^2 \d \sigma_\e(x) = 0,
\end{align*}
which leads to
\bes
	E_\e(\rho_\e) - E_\e(\mu_\e^n)\geq \frac{1}{2} \int_{\M_r\times\M_r} \left( W(x-u) - W(y-v) \right) \d \gamma_\e^n(u,v) \d\gamma_\e^n(x,y),
\ees
that is, by \eqref{eq:gamma-1},
\bes
	E_\e(\rho_\e) - E_\e(\mu_\e^n)\geq \frac{p_\e^2}{2} \int_{\M_r\times\M_r} \left( W(x-u) - W(y-v) \right) \d \beta_\e^n(u,v) \d\beta_\e^n(x,y)
\ees
Using \eqref{eq:beta} we get
\bes
	E_\e(\rho_\e) - E_\e(\mu_\e^n) \geq \frac{p_\e^2}{2} \int_{\Sigma} \int_{\Sigma} \left( W(x-u) - W(x+q-p - u-t+s) \right) \d \theta_\e^n(u,s,t) \d\theta_\e^n(x,p,q)
\ees
Still following the proof of Lemma \ref{lem:main}, we obtain
\begin{align*}
	E_\e(\rho_\e) - E_\e(\mu_\e^n) &\geq p_\e^2 \left( E(\hat \rho_\e) - E(\nu_\e^n) \right) - D p_\e^2 d_2^2(\hat\rho_\e,\nu_\e^n)\\
	&\phantom{{}\geq{}}- \frac{Bp_\e^2}{2} \int_{\Sigma} \int_{\Sigma} (|x-p| + |u-s|) (|p-q| + |s-t| ) \d \theta_\e^n(u,s,t) \d\theta_\e^n(x,p,q),
\end{align*}
and 
\be\label{eq:E-ineq-1}
	E_\e(\rho_\e) - E_\e(\mu_\e^n) \geq p_\e^2 \left( E(\hat \rho_\e) - E(\nu_\e^n) \right) - 2B p_\e^2 d_2(\hat\rho_\e,\rho_\e) d_2(\hat\rho_\e,\nu_\e^n)- D p_\e^2 d_2^2(\hat\rho_\e,\nu_\e^n),
\ee
where the constants $B$ and $D$ are defined as in the proof of Lemma \ref{lem:main}.

Finally, using \eqref{eq:d2-ineq-1} and \eqref{eq:E-ineq-1} we yield
\begin{align*}
	&\lim_{n\to\infty} \frac{(E_\e(\rho_\e) - E_\e(\mu_\e^n))_+}{d_2(\hat\rho_\e,\mu_\e^n)}\\
	&\geq \lim_{n\to\infty} \frac{\left( p_\e^2 \left( E(\hat \rho_\e) - E(\nu_\e^n) \right) - 2B p_\e^2 d_2(\hat\rho_\e,\rho_\e) d_2(\hat\rho_\e,\nu_\e^n)- Dp_\e^2 d_2^2(\hat\rho_\e,\nu_\e^n) \right)_+}{\sqrt{p_\e} d_2(\hat \rho_\e,\nu_\e^n)}\\
	&\geq p_\e^{3/2} \lim_{n\to\infty} \left( \frac{\left( E(\hat \rho_\e) - E(\nu_\e^n) \right)_+}{d_2(\hat \rho_\e,\nu_\e^n)} - D d_2(\hat\rho_\e,\nu_\e^n) \right) - 2 Bp_\e^{3/2} d_2(\hat\rho_\e,\rho_\e)\\
	&= p_\e^{3/2} \lim_{n\to\infty} \frac{\left( E(\hat \rho_\e) - E(\nu_\e^n) \right)_+}{d_2(\hat \rho_\e,\nu_\e^n)} - 2 Bp_\e^{3/2} d_2(\hat\rho_\e,\rho_\e).
\end{align*}
Thus, by $p_\e \to1$ and $d_2(\hat \rho_\e,\rho_\e) \to 0$ as $\e\to0$, we obtain \eqref{eq:ls-mu-nu}, which ends the proof.
\end{proof}

\begin{rem}
	The convergence given in Theorem \ref{thm:main-epsilon} and the second line in \eqref{eq:serfaty-conseq} imply
\bes
	\frac12 \ird W*\rho_\e(t,x) \d\rho_\e(t,x) + \frac1\e \ird d_\M(x)^2 \d \rho_\e(t,x) \xrightarrow[\e\to0]{} \frac12 \int_\M W*\rho(t,x) \d \rho(t,x).
\ees
By continuity and boundedness from below of $W$, $\ird W*\rho_\e(t,x) \d\rho_\e(t,x) \to \int_\M W*\rho(t,x) \d \rho(t,x)$ as $\e\to0$, and thus
\bes
	\ird d_\M(x)^2 \d \rho_\e(t,x) = o(\e) \quad \mbox{as $\e\to0$},
\ees
which gives the ``rate of attraction'' to $\M$ as $\e\to0$.
\end{rem}

\subsection{Stability}

Theorem \ref{thm:main-epsilon} shows, by approximation via the gradient flow for $E_\e$, that the gradient flow for $E$ has a solution. It does not, however, prove uniqueness of such a solution. It turns out that a Wasserstein stability estimate holds on $\M$, as shown by Proposition \ref{prop:stability} below. This ensures the uniqueness of the solution to the gradient flow for $E$, and therefore yields the well-posedness of the Wasserstein gradient flow for $E$.

We omit the proof of the stability result as it follows the exact same steps as those found in the proof of \cite[Proposition 3.1 and Theorem 1.6]{CSW} with the additional help of the lemma below whose proof can be found in \cite[Proposition 3.1]{RZ}. Before stating the lemma and stability result, we recall the notion of proximal normal cone of $\M$ at a point $x\in\M$: the \emph{proximal normal cone} of $\M$ at $x$ is the set $N_x\M$ given by
\bes
N_x\M = \{ v\in\R^d \st \exists\, \alpha\in(0,\infty),\, d_\M(x+\alpha v) = \alpha |v| \},
\ees
that is, $N_x\M$ is the set of vectors $v\in\R^d$ so that there exists $\alpha>0$ such that $x$ is a closest point to $x+\alpha v$ on $\M$. 

\begin{lem}\label{lem:positive-reach-ineq}
  Let $x,y \in \M$. For every $v\in N_x\M$ there holds
  \bes
  \ap{y-x,v} \leq \frac{|y-x|^2|v|}{2\eta_\M},
  \ees
where we recall $\eta_\M>0$ is the reach of $\M$.
\end{lem}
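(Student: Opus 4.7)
The plan is to use a standard geometric characterization of the proximal normal cone afforded by positive reach: if $v \in N_x\M$ is nonzero, one may enlarge the scalar $\alpha$ appearing in the definition all the way up to $\eta_\M/|v|$. The definition only furnishes some $\alpha_* > 0$ with $d_\M(x + \alpha_* v) = \alpha_* |v|$, meaning that $x$ is a closest point of $\M$ to $x + \alpha_* v$. However, by the defining property of the reach, every point of the tubular neighborhood $\M_{\eta_\M}$ has a unique projection on $\M$, and along the ray $t \mapsto x + tv$ with $t|v| < \eta_\M$ this point stays inside $\M_{\eta_\M}$; by continuity of the projection and the fact that for small $t$ the projection equals $x$, the projection equals $x$ throughout the interval $[0,\eta_\M/|v|]$. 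In particular $d_\M(x + \alpha v) = \alpha |v|$ for any $\alpha \leq \eta_\M/|v|$.

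Once this is in place, I would fix $\alpha = \eta_\M/|v|$, set $z = x + \alpha v$, and apply the inequality $|z - x|^2 \leq |z - y|^2$ to the arbitrary $y \in \M$. A one-line expansion gives
\begin{equation*}
  \alpha^2 |v|^2 \;\leq\; |y - x|^2 - 2\alpha\ap{v, y - x} + \alpha^2 |v|^2,
\end{equation*}
which rearranges to
\begin{equation*}
  \ap{y - x, v} \;\leq\; \frac{|y - x|^2}{2\alpha} \;=\; \frac{|y - x|^2 |v|}{2\eta_\M},
\end{equation*}
as claimed. The case $v = 0$ is immediate since both sides vanish.

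The main (indeed essentially the only) obstacle is the enlargement step above: justifying that the existence of a single $\alpha_* > 0$ realizing the proximal normal property propagates to all $\alpha \in (0, \eta_\M/|v|]$. This is well known for positive-reach sets and is precisely the content of the cited result \cite[Proposition 3.1]{RZ}; it rests on the single-valuedness and continuity of the metric projection $\Pi_\M$ on $\M_{\eta_\M}$, which holds by definition of reach. Everything else is a routine Pythagorean-type calculation.
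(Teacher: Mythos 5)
The paper does not actually prove this lemma; it simply refers the reader to \cite[Proposition 3.1]{RZ}, so your proposal is being compared against a citation rather than an argument. Your route is the classical one (it is essentially Federer's proof of the quantitative normal-cone inequality for sets of positive reach): extend the proximal normal property along the ray $t\mapsto x+tv$ up to the reach, then compare $|z-x|$ with $|z-y|$ for $z=x+\alpha v$ and expand the square. The final computation is correct, and the endpoint issue at $\alpha=\eta_\M/|v|$ is harmless since the inequality only requires $x$ to be \emph{a} nearest point of $\M$ to $z$, which follows by continuity of $d_\M$ from the open range of $\alpha$ (or by letting $\alpha\uparrow\eta_\M/|v|$ in the final inequality).

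The one place where your argument is not self-contained is exactly where you say it is: the enlargement step. Your justification --- ``by continuity of the projection \dots the projection equals $x$ throughout the interval'' --- only establishes that the set $S=\{t\in[0,\eta_\M/|v|): \Pi_\M(x+tv)=x\}$ is relatively closed and contains $[0,\alpha_*]$. For a connectedness argument you would also need $S$ to be relatively open, and continuity of $\Pi_\M$ alone does not give that: a continuous projection could in principle drift away from $x$ immediately after $\alpha_*$. Ruling this out is the genuine content of the extension property (Federer's Theorem 4.8(6), or the corresponding statement in \cite{RZ}), whose proof is a separate quantitative argument, not a soft topological one. Since you explicitly flag this as the key external ingredient and source it to the same reference the paper uses, your proof is correct at the same level of rigor as the paper's; just be aware that the phrase ``by continuity'' overstates what continuity alone delivers.
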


\begin{prop}[stability estimate] \label{prop:stability}
	Denote by $\lambda_W\leq0$ a semiconvexity constant of $W$. Let $\rho_1$ and $\rho_2$ be two gradient flows for $E$ (which we know exist by Theorem \ref{thm:main-epsilon}) starting from $\rho_1^0 \in \P(\M)$ and $\rho_2^0 \in \P(\M)$, respectively. Then, for all $t\geq0$,
	\bes
		d_2(\rho_1(t),\rho_2(t)) \leq e^{\left(-\lambda_W + \frac{\|\grad W\|_{L^\infty(\M)}}{\eta_\M} \right) t} d_2(\rho_1^0,\rho_2^0).
	\ees
\end{prop}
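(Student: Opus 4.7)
The plan is to differentiate $t\mapsto \tfrac12 d_2^2(\rho_1(t),\rho_2(t))$ along an optimal transport plan, and then split the resulting bilinear form into a ``semiconvexity of $W$'' contribution and a ``projection error'' contribution which is handled by Lemma~\ref{lem:positive-reach-ineq}. The final estimate follows from Gr\"onwall's lemma.

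First, I would recall that the gradient flow relation \eqref{eq:grad-descent} gives velocities $u_i(t,x)=P_\M(x)(-\grad W*\rho_i(t,\cdot)(x))$ for $i=1,2$. Since the Clarke tangent cone and the proximal normal cone $N_x\M$ are polar for sets of positive reach, the projection onto the tangent cone decomposes as
\begin{equation*}
    u_i(t,x) = -\grad W*\rho_i(t,\cdot)(x) - n_i(t,x), \quad n_i(t,x)\in N_x\M,
\end{equation*}
with $|n_i(t,x)| \le |\grad W*\rho_i(t,\cdot)(x)| \le \|\grad W\|_{L^\infty(\M)}$ (the projection is $1$-Lipschitz and shrinks norms). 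Next, following the strategy of \cite[Proposition 3.1, Theorem 1.6]{CSW}, for almost every $t\geq 0$ one can differentiate along a measurable selection $\pi_t$ of optimal transport plans between $\rho_1(t)$ and $\rho_2(t)$ and obtain
\begin{equation*}
    \frac{1}{2}\frac{\der}{\der t} d_2^2(\rho_1(t),\rho_2(t)) = \int_{\M\times\M} \ap{x-y,\, u_1(t,x)-u_2(t,y)} \d\pi_t(x,y).
\end{equation*}
Substituting the decomposition of $u_i$ splits the right-hand side as $I_W(t) + I_N(t)$, with
\begin{align*}
    I_W(t) &= -\int_{\M\times\M} \ap{x-y,\,\grad W*\rho_1(t,\cdot)(x)-\grad W*\rho_2(t,\cdot)(y)}\d\pi_t(x,y), \\
    I_N(t) &= -\int_{\M\times\M} \ap{x-y,\,n_1(t,x)-n_2(t,y)}\d\pi_t(x,y).
\end{align*}

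For the interaction term $I_W$, the standard semiconvexity computation (symmetrizing using the symmetry of $W$ and applying $\lambda_W$-semiconvexity to $W(\cdot-\cdot)-W(\cdot-\cdot)$ along the pairing) yields
\begin{equation*}
    I_W(t) \leq -\lambda_W\, d_2^2(\rho_1(t),\rho_2(t)).
\end{equation*}
For the projection error $I_N$, rewriting $-\ap{x-y,n_1(t,x)}=\ap{y-x,n_1(t,x)}$ and $-\ap{y-x,n_2(t,y)}=\ap{x-y,n_2(t,y)}$ and applying Lemma~\ref{lem:positive-reach-ineq} at $x$ and at $y$ respectively (both using that $\supp \pi_t \subset \M\times\M$), together with the pointwise bound $|n_i|\le \|\grad W\|_{L^\infty(\M)}$, gives
\begin{equation*}
    I_N(t) \leq \frac{\|\grad W\|_{L^\infty(\M)}}{\eta_\M}\int_{\M\times\M}|x-y|^2\d\pi_t(x,y) = \frac{\|\grad W\|_{L^\infty(\M)}}{\eta_\M}\, d_2^2(\rho_1(t),\rho_2(t)).
\end{equation*}
Combining these,
\begin{equation*}
    \frac{\der}{\der t} d_2^2(\rho_1(t),\rho_2(t)) \leq 2\!\left(-\lambda_W + \frac{\|\grad W\|_{L^\infty(\M)}}{\eta_\M}\right) d_2^2(\rho_1(t),\rho_2(t)),
\end{equation*}
and Gr\"onwall's lemma delivers the announced exponential bound.

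The main obstacle is the rigorous justification of the time differentiation of $d_2^2(\rho_1,\rho_2)$ along the two curves simultaneously, together with the measurable selection of $\pi_t$ and the fact that the velocity fields $u_i$ are tangent to $\M$ in the sense of curves of maximal slope---this is exactly the content of \cite[Proposition 3.1]{CSW}, which we invoke. Everything else (semiconvexity handling of $I_W$, uniform bound on $n_i$, and the positive-reach inequality for $I_N$) is a pointwise computation.
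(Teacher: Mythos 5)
Your proposal is correct and follows exactly the route the paper intends: the paper omits the proof, stating only that it follows the steps of \cite[Proposition 3.1 and Theorem 1.6]{CSW} supplemented by Lemma~\ref{lem:positive-reach-ineq}, which is precisely your decomposition into the semiconvexity term $I_W$ and the normal-cone correction $I_N$ controlled by the positive-reach inequality, followed by Gr\"onwall.
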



\section{Projected gradient flow scheme (proof of Theorem \ref{thm:main-projected})}\label{sec:result-projected}

We consider $\mu$ and $\rho$ solutions to the classical full-space nonlocal-interaction equation and the gradient flow for $E$, respectively, that is,
\be\label{eq:interaction-equations}
	\begin{cases} \p_t \mu + \dive(\mu v) = 0,\\ v = -\grad W * \mu, \end{cases} \quad \mbox{and} \quad \begin{cases} \p_t \rho + \dive(\rho u) = 0,\\ u = P_\M(-\grad W * \rho). \end{cases}
\ee
Note that for any $x\in\M$, the projections on $T_x\M$ and $T_x^\mathrm{in}\M$ satisfy $\Pi_{T_x\M}(v) - \Pi_{T_x^\mathrm{in}\M}(v) \in N_x\M$ for all $v\in\R^d$, where we recall that $N_x\M$ is the proximal normal cone of $\M$ at $x$. To simplify the notation in this section, we shall use $P_x$ for $\Pi_{T_x^\mathrm{in}\M}$ and $P_x^\mathrm{T}$ for $\Pi_{T_x\M}$.


For all $t\geq0$, let $\Phi_t\: \R^d \to \R^d$ be the flow map associated to the classical full-space nonlocal-interaction equation and let $\Psi_t\: \M \to \M$ be that associated to the gradient flow for $E$. Then, the solutions to \eqref{eq:interaction-equations} starting from some $\rho^0\in\P(\M)$ are given by
\bes
	\mu(t) = (\Phi_t)_\#\rho^0 \quad \mbox{and} \quad \rho(t) = (\Psi_t)_\#\rho^0.
\ees
Fixing $r\in(0,\eta_\M)$ we write $\M_r$ the $r$-neighborhood of $\M$, i.e., $\M_r = \{ x\in \R^d \st d_\M(x) < r\}$, so that the projection $\Pi_\M\:\M_r\to\M$ on $\M$ is well-defined. Let us take a time step size $\tau>0$ small enough such that $\supp((\Phi_t)_\#\nu) \subset \M_r$ for all $\nu\in\P(\M)$ and $t\in[0,\tau]$. Such a time step exists since $W \in C^2(\R^d)$ and $\M$ is compact; indeed, this ensures that their exists a constant $C>0$ such that for all $\nu\in\P(\M)$ there holds $\norm{\grad W * \nu}_{L^\infty(\M_r)} \leq \norm{\grad W}_{L^\infty(\M_r)} \leq C$. We can then define the sequence $(\nu_n^\tau)_n \subset \P(\M)$ as follows: for any integer $n\geq0$,
\bes
	\begin{cases} \nu_0^\tau = \rho^0,\\
	\nu_{n+1}^\tau = (\Pi_\M)_\# ((\Phi_\tau)_\#\nu_n^\tau). \end{cases}
\ees
We define the interpolation $\nu^\tau \: [0,\infty) \to \P(\M)$ by
\bes
	\begin{cases} \nu^\tau(0) = \rho^0,\\
	\nu^\tau(t) = (\Pi_\M)_\# ((\Phi_{t-n\tau})_\# \nu_n^\tau) & \mbox{for all $t\in(n\tau,(n+1)\tau]$}. \end{cases}
\ees
In particular, we have
\bes
	\nu^\tau(t) = (\Pi_\M)_\# ((\Phi_t)_\# \rho^0) = (\Pi_\M)_\# \mu(t)  \quad \mbox{for all $t\in[0,\tau]$}.
\ees
We now show that $\nu^\tau$ is a good approximation of $\rho$, that is, $d_2(\nu^\tau(t),\rho(t)) \to 0$ as $\tau\to0$ for all $t\geq0$, which indeed provides the proof of Theorem \ref{thm:main-projected}.

Let $R>0$ be such that $\M_r - \M_r := \{ x-y \st x,y \in \M_r\} \subset B_R$, where $B_R$ is the open ball centered at $0$ with radius $R$. Let $M_\mt{v} = \| \nabla W \|_{L^\infty(B_R)}$. In all of the arguments that follow we further restrict $\tau$ such that 
\[ \tau \leq \frac{\eta_\M}{8 M_\mt{v}}. \]
Writing $h\:[0,\tau] \to \R$ the function defined by $h(t) = d_2(\nu^\tau(t),\rho(t))$ for all $t\in [0,\tau]$, we seek, for a fixed $t\in[0,\tau]$, to estimate 
\be\label{eq:tri-ineq-scheme}
	h^2(t):= d_2^2(\nu^\tau(t),\rho(t)) = \ird | \Pi_\M(\Phi_t(x)) - \Psi_t(x) |^2 \d \rho^0(x).
\ee
Let us take the right-derivative of $h^2$. For any $x\in\R^d$ we write $y = \Phi_t(x)$ and $z = \Psi_t(x)$. Then,
\begin{align*}
\frac12 \frac{\der^+ h^2}{\der t}(t) & =  \ird (\Pi_\M(y) - z ) \left( P_{\Pi_\M(y)}(-\nabla W * \mu_t(y)) - P_{z }(-\nabla W * \rho_t(z))  \right) \d \rho^0(x)\\
	&\leq \left|  \ird (\Pi_\M(y) - z ) \left( P_{\Pi_\M(y)}(-\nabla W * \mu_t(y))  - P^\mathrm{T}_{\Pi_\M(y)}(-\nabla W * \mu_t(y))   \right) \d\rho^0(x)  \right| \\
	& \quad +  \left|  \ird (\Pi_\M(y) - z ) \left( P^\mathrm{T}_{\Pi_\M(y)}(-\nabla W * \mu_t(y))   - P^\mathrm{T}_{z }(-\nabla W * \rho_t(z))   \right)  \d\rho^0(x) \right|  	\\
	& \quad +  \left|  \ird (\Pi_\M(y) - z ) \left(P^\mathrm{T}_{z }(-\nabla W * \rho_t(z))    - P_{z }(-\nabla W * \rho_t(z))   \right)  \d\rho^0(x) \right|,\\
\shortintertext{using Lemma \ref{lem:positive-reach-ineq} to estimate the first and third terms we continue the computation:} 
	& \leq \frac{1}{2 \eta_\M} \ird |\Pi_\M(y) - z |^2 (|\nabla W * \mu_t(y)|+|\nabla W * \rho_t(z)| ) \,  \d\rho^0(x) \\
	& \phantom{\leq} + \left|  \ird (\Pi_\M(y) - z ) \left( P^\mathrm{T}_{\Pi_\M(y)}(-\nabla W * \mu_t(y))   - P^\mathrm{T}_{\Pi_\M(y) }(-\nabla W * \rho_t(z))   \right)  \d\rho^0(x) \right|  \\
	& \phantom{\leq} + \left|  \ird (\Pi_\M(y) - z ) \left(  P^\mathrm{T}_{\Pi_\M(y) }(-\nabla W * \rho_t(z))  - P^\mathrm{T}_{z }(-\nabla W * \rho_t(z))   \right)  \d\rho^0(x) \right|,  \\
\shortintertext{using Proposition 6.2 in \cite{NiSmWe08} to estimate the third term we further have:} 
	& \leq \frac{ \|\nabla W\|_{L^\infty(B_R)} }{\eta_\M} \ird |\Pi_\M(y) - z |^2  \d\rho^0(x)  \\
	& \quad + \ird |\Pi_\M(y) - z| \, \left| \ird \nabla W(\Phi_t(x) - \Phi_t(s)) - \nabla W(\Psi_t(x) - \Psi_t(s)) \d\rho^0(s)   \right| \d\rho^0(x)  \\
	& \quad + \|\nabla W\|_{L^\infty(B_R)} \ird |\Pi_\M(y) - z| \,  \frac{\sqrt 2}{\eta_\M}  d_\M(\Pi_\M(y),  z)  \, \d\rho^0(x),  \\
\shortintertext{using \cite[Proposition 2]{GGHS} and $|\Pi_\M(y) - z| \leq |\Pi_\M(y) - y| + |y-z| \leq 2M_\mt{v} \tau \leq \frac12 \eta_\M$ we finally get:} 
	& \leq \frac{ \|\nabla W\|_{L^\infty(B_R)} }{\eta_\M} \, h^2(t) \\
	& \quad + \|D^2 W\|_{L^\infty(B_R)} \!\!\ird \!|\Pi_\M(y) - z| \!\left| \ird \!|\Phi_t(x) - \Psi_t(x)| \!-\! |\Phi_t(s) - \Psi_t(s)| \d\rho^0(s)   \right| \!\d\rho^0(x)  \\
	& \quad + \frac{5}{\eta_\M} \|\nabla W\|_{L^\infty(B_R)} \ird |\Pi_\M(y) - z|^2  \d\rho^0(x)  \\
	& \leq \frac{ 6 \|\nabla W\|_{L^\infty(B_R)} }{\eta_\M} \, h^2(t)  + 4 \|D^2 W\|_{L^\infty(B_R)} \|\nabla W\|_{L^\infty(B_R)}    \, h(t) t .\\
\end{align*}
Therefore,
\[  \frac{\der^+h}{\der t}(t)  \leq \frac{ 6 \|\nabla W\|_{L^\infty(B_R)} }{\eta_\M} \, h(t)  + 4 \|D^2 W\|_{L^\infty(B_R)}
 \|\nabla W\|_{L^\infty(B_R)}    \, t =: a h(t) + bt, \]
and thus
\[  \frac{\der ^+}{\der t} (h(t)e^{-at}) \leq bt  e^{-at} \leq bt . \]
 Using that $h(0)=0$ we obtain that for all $t \in [0, \tau]$
 \begin{equation} \label{eq:schstab}
d_2(\nu^\tau(t),\rho(t))  =  h(t) \leq \frac12 bt^2 e^{at} = 2\|D^2 W\|_{L^\infty(B_R)}  \|\nabla W\|_{L^\infty(B_R)} t^2 e^{\frac{6\|\grad W\|_{L^\infty(\M)}t}{\eta_\M}},
\end{equation}
confirming that the local error of the proposed scheme is what one would expect for a first-order scheme.

Combining this local error estimate of the scheme with the stability estimates of Lemma  \ref{prop:stability} we obtain, for all $t \geq 0$,
\begin{equation}
 d_2(\nu^\tau(t),\rho(t))  \leq \tau e^{\frac{6\|\grad W\|_{L^\infty(\M)}\tau}{\eta_\M}}\alpha_t,
\end{equation}
where
\begin{equation*}
  \alpha_t = \frac{2\|D^2 W\|_{L^\infty(B_R)}  \|\nabla W\|_{L^\infty(B_R)}}{\left(-\lambda_W + \frac{\|\grad W\|_{L^\infty(\M)}}{\eta_\M} \right) } \left( e^{\left(-\lambda_W + \frac{\|\grad W\|_{L^\infty(\M)}}{\eta_\M} \right) t} -1 \right).
\end{equation*}

\section{Numerics} \label{sec:exp}

We present in this section some experiments illustrating the dynamics of particles following the $\e$-gradient flow and projected gradient flow schemes, whose convergence results have been given in Sections \ref{sec:result-epsilon} and \ref{sec:result-projected}, respectively. We consider here very simple interaction potentials and highlight how the geometry of the domain can have decisive influence on the dynamics. For the code we used, we refer the reader to our \href{https://github.com/francesco-patacchini/interaction-equation-attracting-manifolds}{GitHub} repository \cite{GitHub}. Before discussing the experiments, we present the numerical setting.

\subsection{Numerical schemes}

Our discretization is based on the fact that the gradient flow solutions of \eqref{eq:main} and \eqref{eq:main-epsilon} for initial data which are discrete measures become systems of ODEs. 
On the other hand, one can approximate in Wasserstein (as well as  $\infty$-Wasserstein) distance any desired initial measure by discrete measures. The stability estimate of Proposition \ref{prop:stability} ensures that the particle approximations to \eqref{eq:main} approximate well the solutions over time. 

We consider particle solutions with $N$ particles, $x_i\:[0,\infty) \to \R^d$, $i\in\{1,\dots,N\}$. All particles have same mass $\frac{1}{N}$. The initial condition  $(x_1^0,\dots,x_N^0)$ belongs to $(\R^{d})^N$. We moreover write 
\bes
    \mu_N^0 = \frac1N \sum_{i=1}^N \delta_{x_i^0} \quad \mbox{and} \quad \mu_N(t) = \frac1N \sum_{i=1}^N \delta_{x_i(t)},
\ees
for 
the empirical measures associated to the particles. 

\subsubsection{$\e$-gradient flow scheme} \label{sec:epsilon-scheme}
Recalling that the underlying energy $E_\e$ is given in \eqref{eq:energy-epsilon} and plugging the empirical measures $\mu_N$ into it, we yield the following discrete energy:
\bes
    E_{\e,N}(x_1,\dots,x_N) = \frac{1}{2N^2} \sum_{i=1}^N\sum_{j=1}^N W(x_i-x_j) + \frac{1}{N\e} \sum_{i=1}^N d_\M(x_i)^2.
\ees
The gradient flow \eqref{eq:main-epsilon} reduces to the ODE system, for all $i\in\{1,\dots,N\}$,
\be\label{eq:epsilon-ODE}
    \begin{cases} x_i'(t) = -N\grad_i E_{\e,N}(x_1(t),\dots,x_N(t)),\\ x_i(0) = x_i^0. \end{cases}
\ee
The fact that this ODE system converges to the gradient flow for $E_\e$ as $N\to\infty$ is well-known; see for instance \cite[Theorem 3.1]{CCH}. We finally discretize \eqref{eq:epsilon-ODE} in time via a forward Euler scheme: take a time step size $\tau > 0$ and for all $n\in\N_0$ denote by $x_i^n$ the approximation of $x_i(t)$ for all $t\in (n\tau, (n+1)\tau]$ and apply
\be\label{eq:epsilon-discrete}
x_i^{n+1} = x_i^n - \tau N\grad_i E_{\e,N}(x_1^n,\dots,x_N^n).
\ee
As a stopping criterion for our simulations we either stop once a fixed final time is reached or stop as soon as at time step $n+1$ we find
\be\label{eq:discretized-tol}
|\grad E_{\e,N}(x_1^{n+1},\dots,x_N^{n+1})| < \mt{tol},
\ee
for a tolerance $\mt{tol} > 0$.

\subsubsection{Projected gradient flow scheme} \label{sec:proj-scheme}
The classical interaction energy
\bes
    E(\rho) = \frac12 \ird\ird W(x-y) \d\rho(y)\d\rho(x) \quad \text{for all $\rho\in\P(\R^d)$},
\ees
for $\rho = \mu_N$ has the form
\bes
    E_N(x_1,\dots,x_N) = \frac{1}{2N^2} \sum_{i=1}^N\sum_{j=1}^N W(x_i-x_j). 
\ees
The resulting discretization we choose is the following "splitting" scheme: for all $i\in\{1,\dots,N\}$,
\be\label{eq:projected-ODE}
    \begin{cases} y_i'(t) = -N\grad_i E_N(x_1(t),\dots,x_N(t)),\\
    x_i(t) = \Pi_\M(y_i(t)),\\ 
    x_i(0) = x_i^0, \end{cases}
\ee
where we recall that $\Pi_\M$ is the projection on $\M$ from an $r$-neighborhood of $\M$ for some $r<\eta_\M$. Again, we discretize \eqref{eq:projected-ODE} in time via a forward Euler scheme as described above, which for all $n\in\N_0$ yields
\be\label{eq:projected-discrete}
\begin{cases}
    y_i^{n+1} = y_i^n - \tau N\grad_i E_N(x_1^n,\dots,x_N^n),\\
    x_i^{n+1} = \Pi_\M(y_i^{n+1}).
\end{cases}
\ee
Because in this case there is no reason to think that the gradient of $E_N$ should achieve $0$ asymptotically in the dynamics given by \eqref{eq:projected-discrete}, we use an alternative stopping criterion to that in \eqref{eq:discretized-tol}. Instead, we stop our simulation as soon as at time step $n+1$ we find
\be\label{eq:discretized-tol-proj}
    \frac{|E_{\e,N}(x_1^{n+1},\dots,x_N^{n+1}) - E_{\e,N}(x_1^n,\dots,x_N^n)|}{\tau} < \mt{tol}.
\ee

For both the $\e$-gradient flow and projected gradient flow schemes, whenever we do not know an explicit formula for the distance $d_\M(x)$ of a point $x\in\R^d$ to $\M$, we numerically approximate it by initially sampling the boundary of the set $\M$, linearly interpolating between these sampling points, and then computing the distance from the point $x$ and this linear interpolation of the boundary; from this we also deduce an approximation of the projection $\Pi_\M(x)$. Also, again for both the $\e$-gradient flow and projected gradient flow schemes, for each simulation we choose our initial time step size via a backtracking linesearch and then keep it constant through the rest of the simulation. Other approaches can of course be easily tested, for example using an adaptive time step size.

\subsection{Experiments}

Here we report on several experiments we conducted in one and two dimensions. We generally observed that the projected scheme (cf. Section \ref{sec:proj-scheme} is more robust and converges faster than the $\e$-scheme (cf. Section \ref{sec:epsilon-scheme}). For this reason we shall mostly focus on the former in the following simulations.

\subsubsection{1D domain} We consider the gradient flow \eqref{eq:main-epsilon} approximated by \eqref{eq:epsilon-discrete}, where the domain is the union of an interval and a point: $\M = [-1,1] \cup \{1.5\}$. The initial data are $N=100$ particles arranged as a random sample of the interval $[-1.75,1.75]$. The potential is the attractive potential $W(x) = x^2$ and the attraction parameter is $\e=0.1$. Our tolerance for convergence is $\mt{tol} = 10^{-9}$ (cf. \eqref{eq:discretized-tol}). The dynamics shown on Figure \ref{fig:oneDdisconnected} displays two time scales. On the time scale $\frac{1}{\e}=10$ the particles converge to within roughly $\e$ of the domain. After that, the dynamics first gathers the points in the interval $[-1,1]$ together and then gets them as close to the mass near $x=1.5$ as the domain penalty $\tfrac1\e$ allows.
\bfig[!ht]
\centering
	\includegraphics[scale=0.6]{./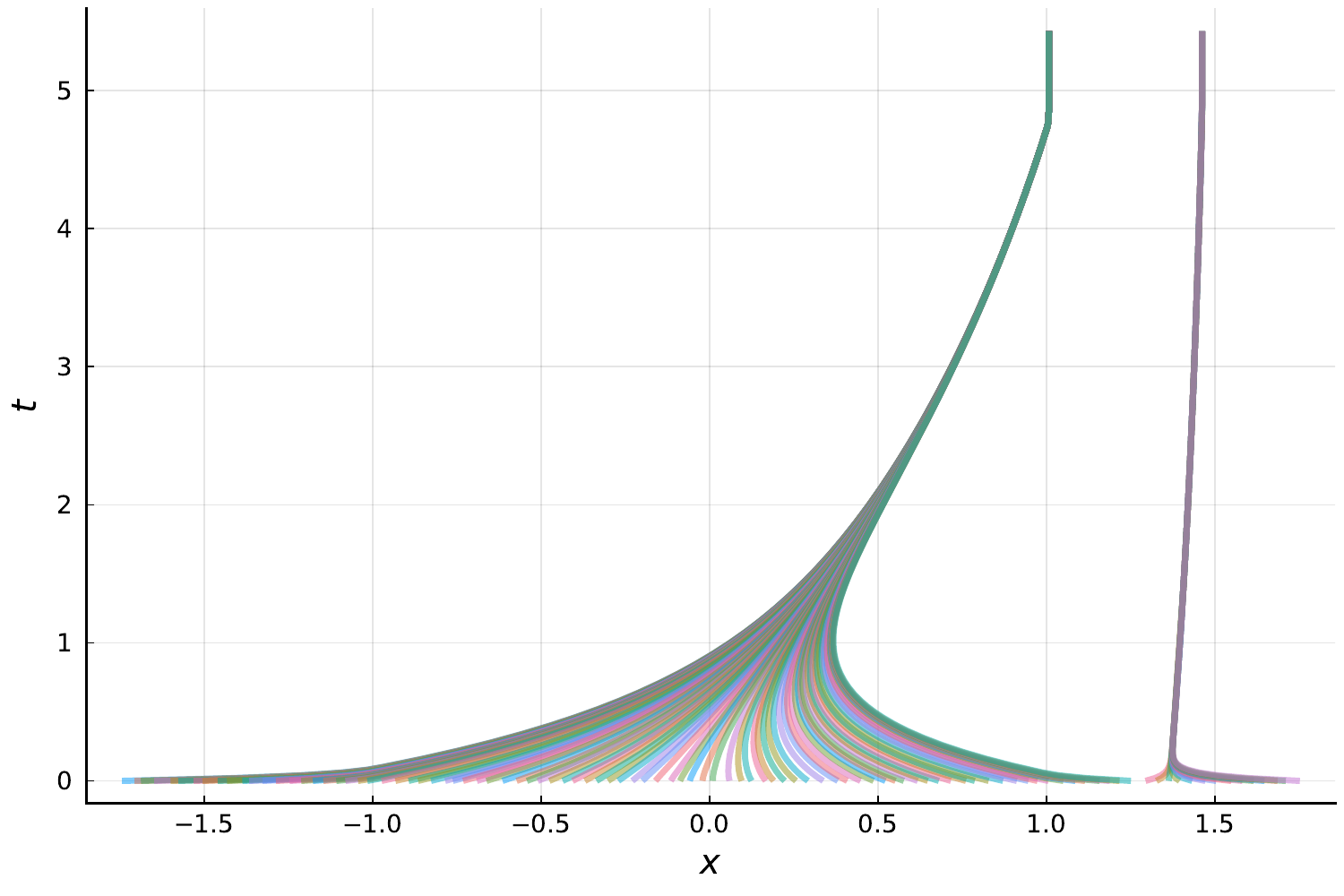}
	\caption{Dynamics of \eqref{eq:main-epsilon} approximated by \eqref{eq:epsilon-discrete} with domain $\M = [-1,1] \cup \{1.5\}$ for an attractive potential. \label{fig:oneDdisconnected}} 
\efig



\subsubsection{Disc in 2D}

We now approximate the gradient flow \eqref{eq:main} using the discretization \eqref{eq:projected-discrete}. The domain is $\M = \overline B(0,1)$, whose boundary is represented in light gray in Figure \ref{fig:disc}. We consider two different potentials: $W(x) = \frac{1}{1+10 |x|^2}$ and $W(x) = \frac{1}{1+|x|^2}$, the first decaying on a shorter length scale than the second. In both cases we initialize the dynamics with $N=196$ points placed on a random perturbation of a uniform rectangular grid within the domain; see blue dots on Figure \ref{fig:disc}. 
The states after time $t=200$ are shown as red dots on Figure \ref{fig:disc}. Two time scales are again observed. Initially the majority of points converges quickly to the boundary. On the slower time scale the points on the boundary migrate to assume nearly uniform distribution. The slow time scale of the motion on the boundary currently prevents us from saying if the final configuration on the right has a uniform distribution on the boundary.

\begin{figure}[!ht]
\subfloat[shorter range potential $W(x) = \frac{1}{1+10 |x|^2}$.]{	\includegraphics[scale=0.51]{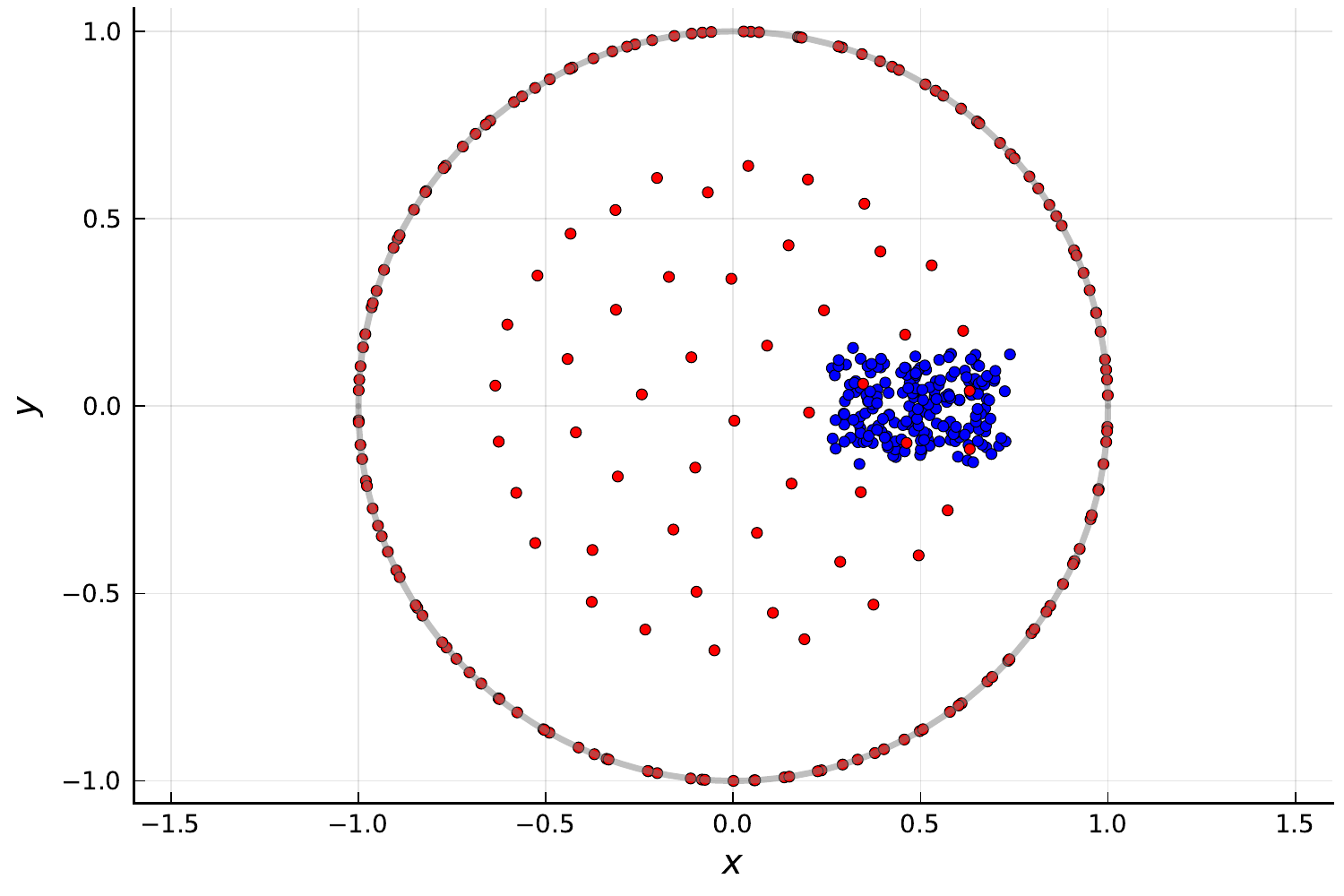}}
      \subfloat[longer range potential $W(x) = \frac{1}{1+ |x|^2}$.]{	\includegraphics[scale=0.51]{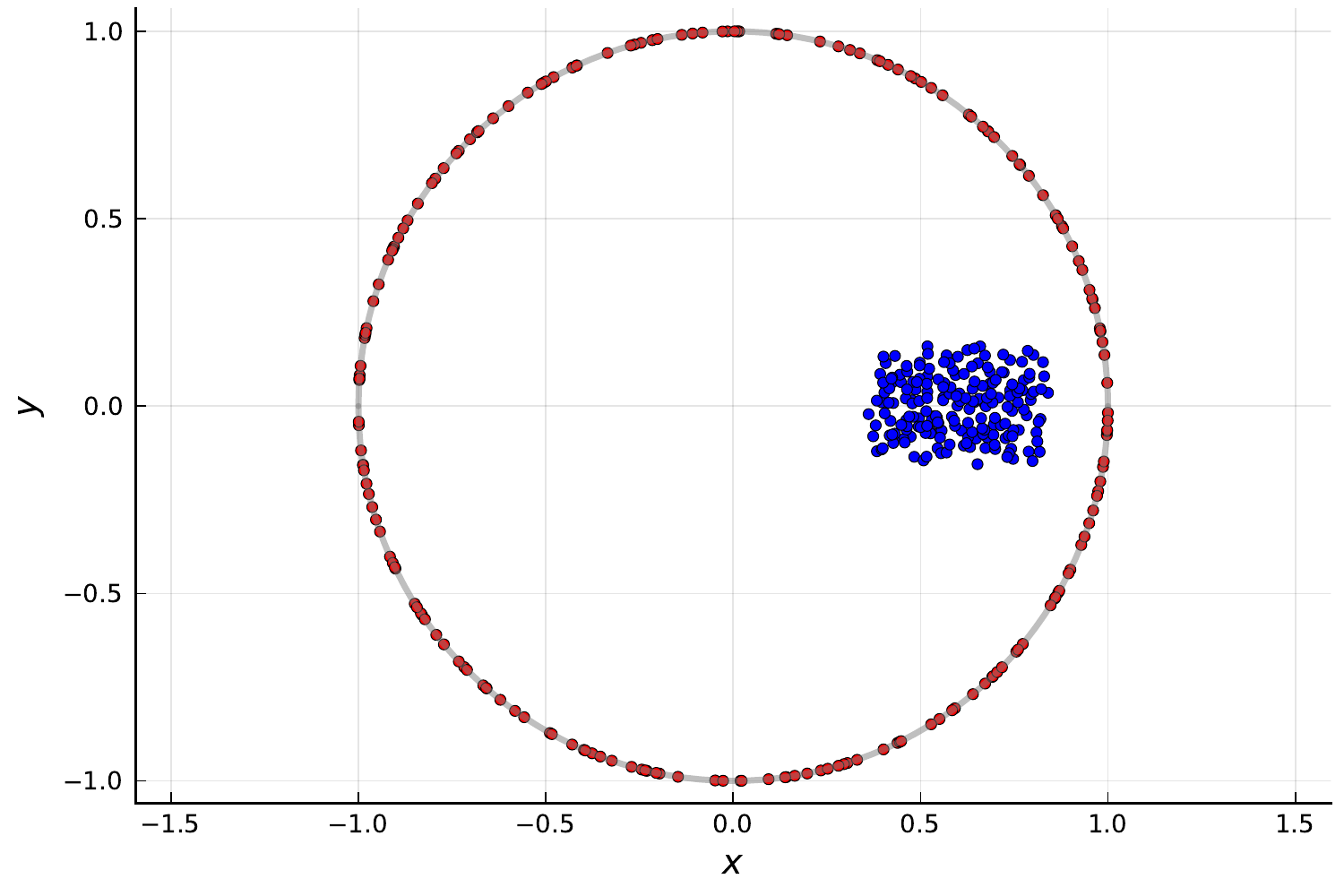}}
       \caption{Dynamics of \eqref{eq:main} approximated by \eqref{eq:projected-discrete} with domain $\M = \overline B(0,1)$ for repulsive potentials with varying length scales. \label{fig:disc}}
       
\end{figure}

\subsubsection{Bean with repulsive potential}

Here we consider the dynamics given by \eqref{eq:projected-discrete} on a bean-shaped domain with smooth boundary depicted in light gray in Figure \ref{fig:bean_interior}. The parametrization of the boundary is in fact given by
\begin{equation*}
  p(x) = \pm 0.4 \sqrt{1 - x^2} (1.1 - \cos(3x)) \quad \text{for all $x\in [-1,1]$}.
\end{equation*}
We consider the repulsive potential $W(x) = \frac{1}{1+|x|^2}$ with distinct randomly perturbed uniform rectangular grids of $N=196$ points as initial distributions. If needed, these grids are projected to our manifold to ensure that the initial data are within our domain. Our tolerance for convergence is taken to be $\mt{tol} = 2\cdot 10^{-10}$ (cf. \eqref{eq:discretized-tol-proj}). While the geometries of the final states, shown as red dots on Figure \ref{fig:bean_interior}, are not that dissimilar, the masses of the points are considerably different, resulting in significantly different energies. We conclude that the configuration on the right is a local minimizer. We noticed that the dynamics from varying initial states would often converge to different local minimizers. Let us clarify that by local minimizers we mean that there are no lower energy states with respect to local perturbations of points. Furthermore we believe that these are also $\infty$-Wasserstein local minimizers. 
In our view the asymmetry of the domain has similar effect to introducing energy barriers and increases the complexity of the energy landscape. It is an interesting theoretical question to understand and predict the features of the energy landscape based on the geometry of the domain, which we shall leave to further investigation.

\begin{figure}[!ht]
\subfloat[symmetric configuration with final energy $E_{196} = 0.260$.]{	\includegraphics[scale=0.52]{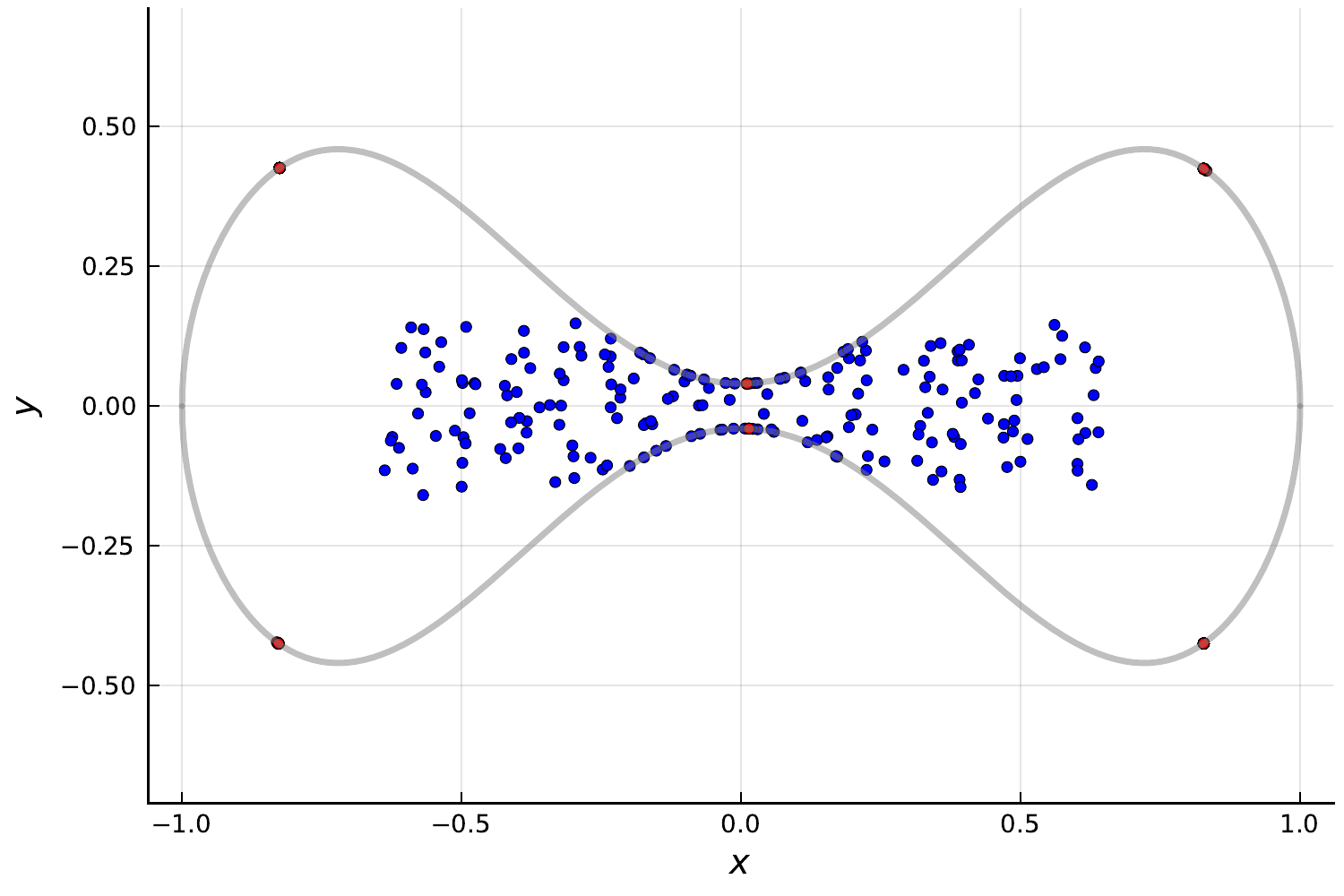}}
      \subfloat[asymmetric configuration with final energy $E_{196} = 0.273$.]{\includegraphics[scale=0.52]{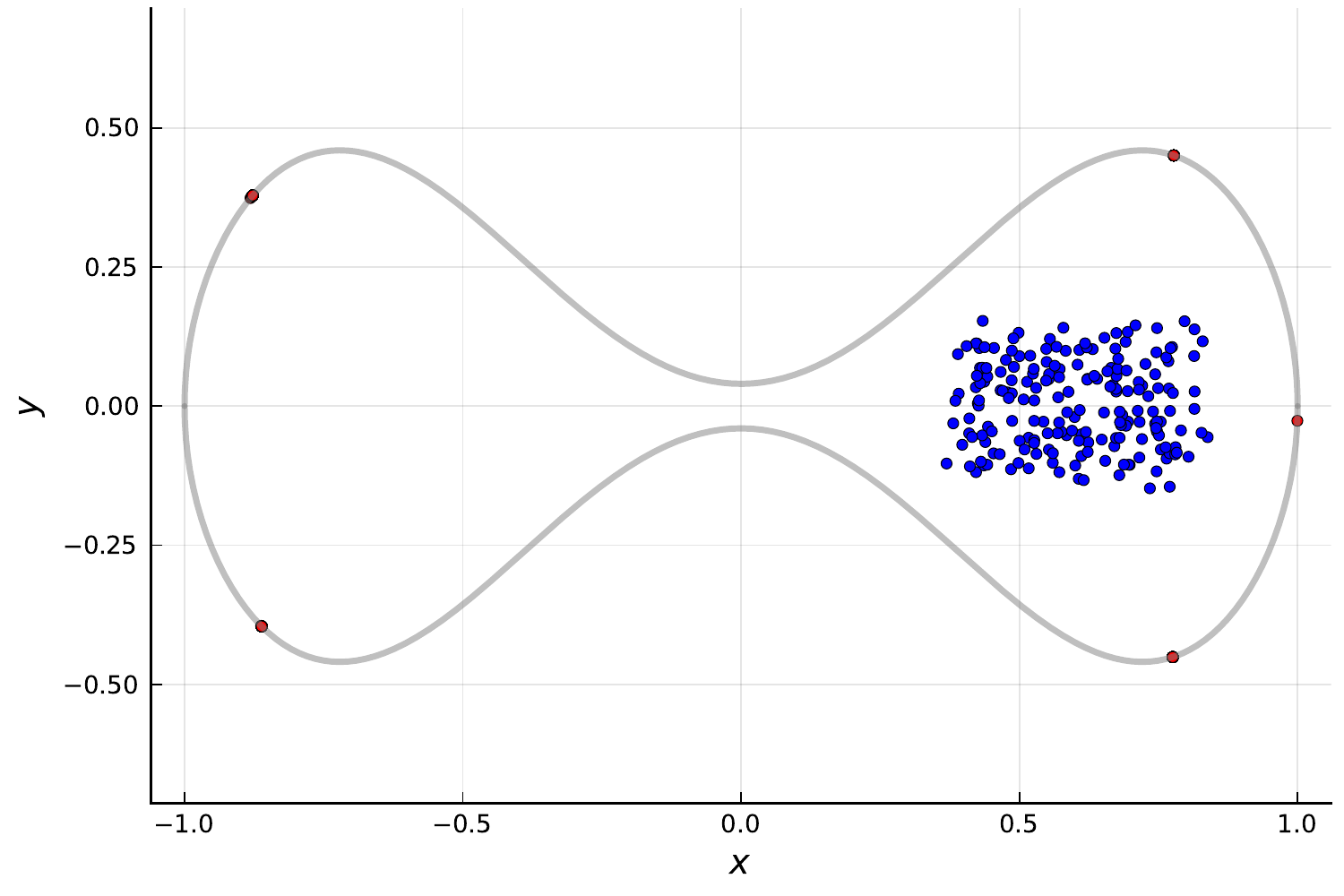}}
       \caption{Dynamics of \eqref{eq:main} approximated by \eqref{eq:projected-discrete} with a bean-shaped domain for a repulsive potential.}
       \label{fig:bean_interior}
\end{figure}



\subsubsection{Bean boundary with attractive potential}

In our final example we consider the domain to be the boundary of the bean-shaped domain from the previous example, on which the dynamics is again given by \eqref{eq:projected-discrete}. Here the initial particles are obtained by projecting a randomly perturbed uniform rectangular grid to the boundary of the bean. We consider the attractive potential $W(x) = -\frac{1}{1+|x|^2}$ and stop the simulation at $t=28$.
While the global minimizer of this energy is achieved when all of the mass is concentrated at a single point, we observe on Figure \ref{fig:bean_bdry} that, due to the nonconvexity of the shape, the dynamics has local minimizers. Such states have a very small basin of attraction, resulting in a metastable-like behavior of the dynamics as illustrated on the right picture of Figure \ref{fig:bean_bdry}. Moreover, we observed that there is a likely local minimizer with three masses (one being on the right-most point of the domain).

\begin{figure}[!ht]
\subfloat[initial (blue dots) and final (red dots) particle positions.]{	\includegraphics[scale=0.51]{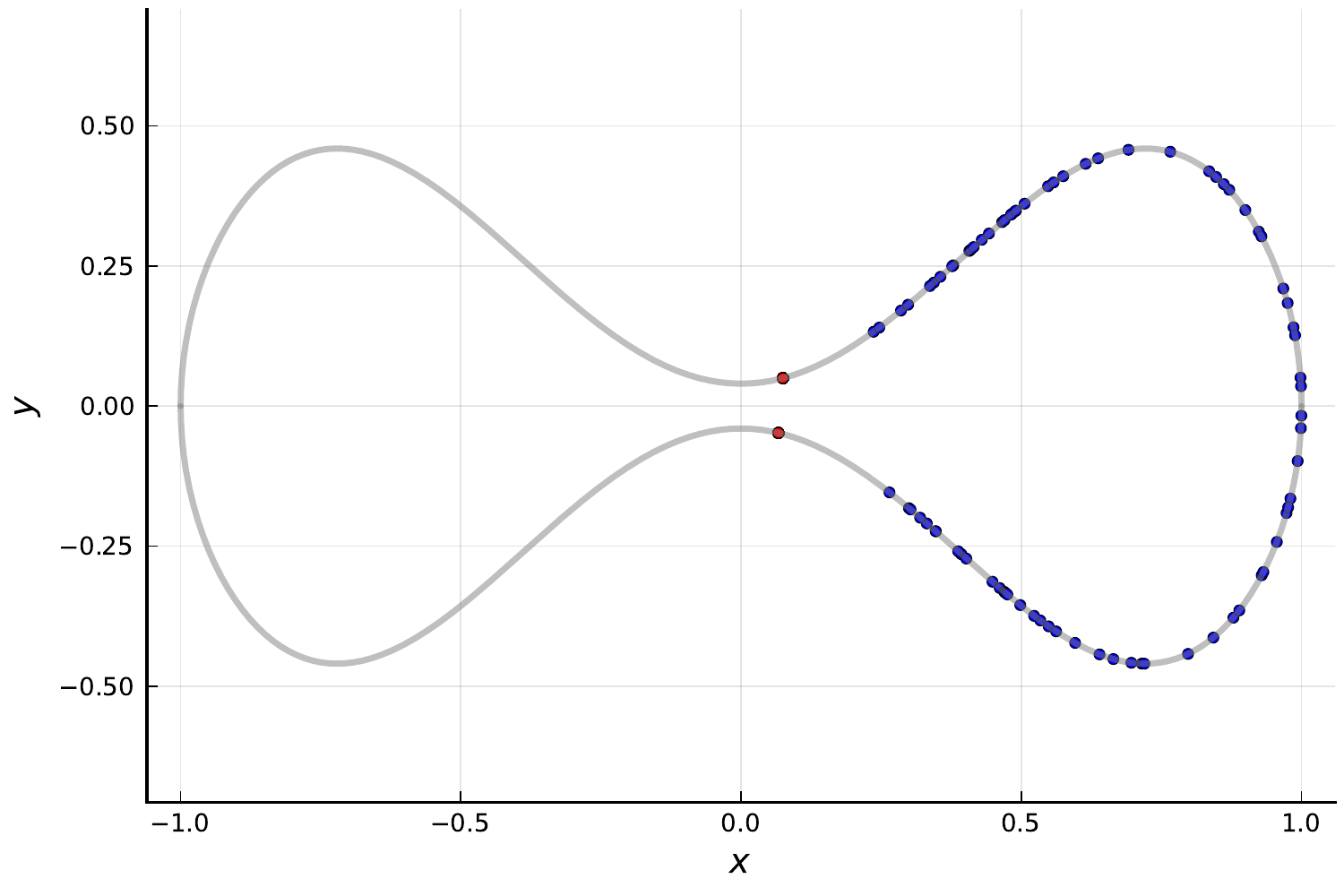}}
      \subfloat[particle trajectories.]{	\includegraphics[scale=0.52]{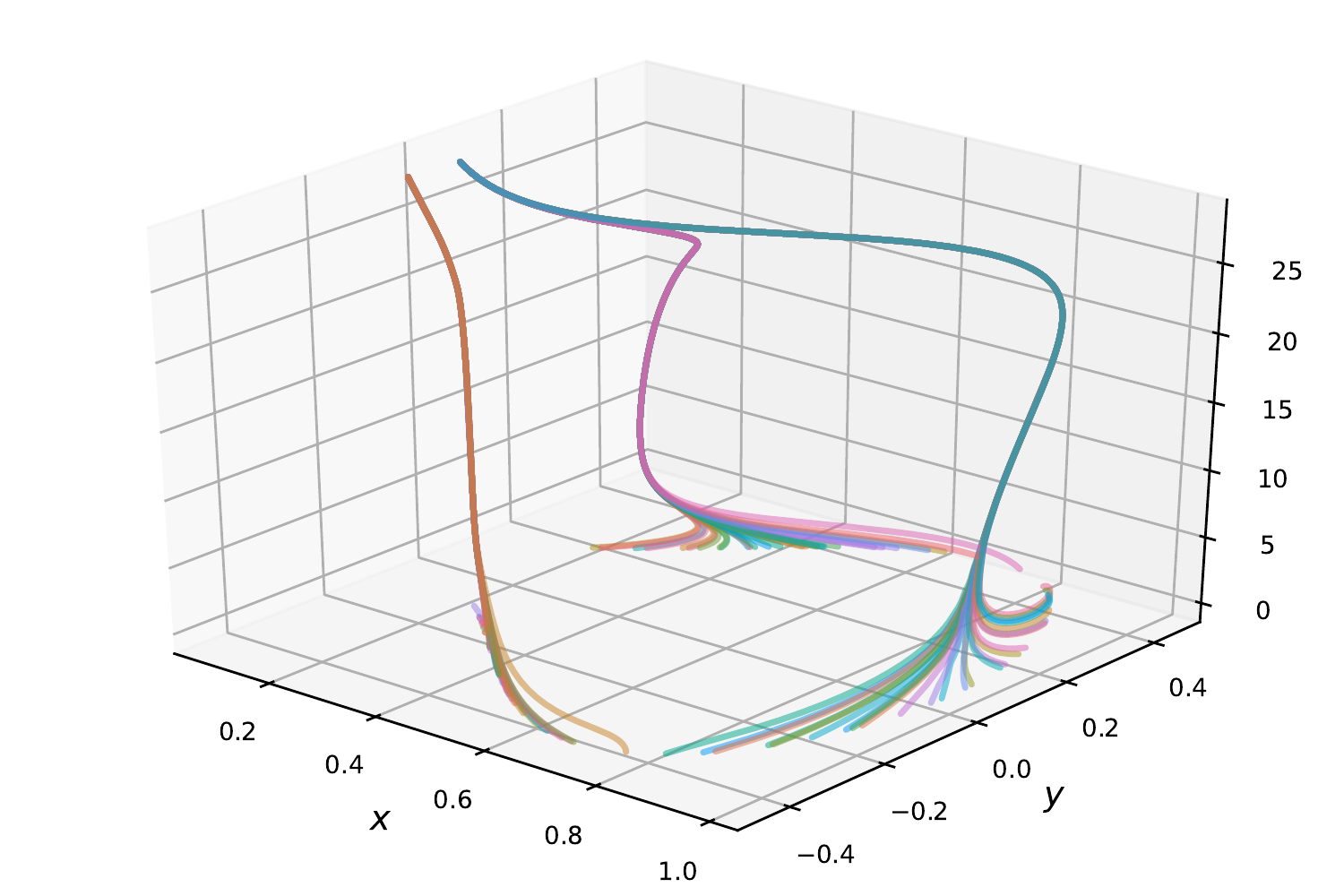}}
       \caption{Dynamics of \eqref{eq:main} approximated by \eqref{eq:projected-discrete} with domain the boundary of a bean shape for a repulsive potential.}
       \label{fig:bean_bdry}
\end{figure}

\section*{Acknowledgements}
DS is grateful to NSF for support via grant DMS 1814991.
DS and FSP are grateful to the Center for Nonlinear Analysis of CMU for its support.

\bibliographystyle{abbrv}
\bibliography{interaction_manifold}

\end{document}